\newenvironment{@abssec}[1]{%
    \if@twocolumn

      \section*{#1}%
    \else

      \vspace{.05in}\footnotesize
      \parindent .2in
 {\upshape\bfseries #1. }\ignorespaces
    \fi}
\par\vspace{.1in}\fi}
\newenvironment{keywords}{\begin{@abssec}{\keywordsname}}{\end{@abssec}}
\newenvironment{AMS}{\begin{@abssec}{\AMSname}}{\end{@abssec}}
\newcommand\keywordsname{Key words}
\newcommand\AMSname{AMS subject classifications}
\newcommand\AMname{AMS subject classification}
\newtheorem{theorem}{Theorem}
 \newtheorem{lemma}[theorem]{Lemma}
\def\qed{\vbox{\hrule height0.6pt\hbox{%
  \vrule height1.3ex width0.6pt\hskip0.8ex
  \vrule width0.6pt}\hrule height0.6pt
 }}
\def\theequation{\arabic{section}.\arabic{equation}}
\def\theequation{\arabic{section}.\arabic{equation}}
 \def\thetheorem{\arabic{section}.\arabic{theorem}}
\def\theequation{\arabic{section}.\arabic{equation}}
 \def\thetheorem{\arabic{section}.\arabic{theorem}}
\def\veps{\varepsilon}
\def\RE{\mathbb R}
\def\ovr{\overline}
\def\la{\lambda}
\def\ka{\kappa}
\def\pa{\partial}
\def\na{\nabla}
\def\eps{\varepsilon}
\def\Om{\Omega}
\def\dist{\mbox{\rm dist}}
\title{Nonlinear diffusion with a bounded \\
stationary  level surface\thanks{This research was partially supported by Grants-in-Aid
for Scientific Research (B) ($\sharp$ 15340047 and $\sharp$ 20340031) and a Grant-in-Aid for Exploratory Research ($\sharp$ 18654027) of
Japan Society for the Promotion of Science, and
by a
Grant of the Ital\-ian MURST.}}
\author{Rolando Magnanini\thanks{Dipartimento di Matematica U.~Dini,
Universit\` a di Firenze, viale Morgagni 67/A, 50134 Firenze, Italy.
({\tt magnanin@math.unifi.it}).}
  \quad and\quad Shigeru Sakaguchi\thanks{Department of Applied Mathematics,
Graduate School of  Engineering, Hiroshima
University, Higashi-Hiroshima, 739-8527,  Japan.
({\tt sakaguch@amath.hiroshima-u.ac.jp}).}}
\begin{document}

\maketitle

\begin{abstract}
We consider nonlinear diffusion of some substance in a container   (not necessarily bounded) with bounded boundary 
of class $C^2$. 
Suppose that, initially,  the container is empty
and, at all times, the substance at its boundary is kept
at density $1.$ We show that, if the container contains
a proper $C^2$-subdomain on whose boundary the substance has constant density at each given time,
then the boundary of the container must be a sphere. We also consider  nonlinear diffusion in the whole $\mathbb R^N$
of some substance whose density is initially a characteristic function of the complement of a domain with bounded $C^2$ boundary, and obtain similar results. These results are also extended to the heat flow in the sphere $\mathbb S^N$ and
the hyperbolic space $\mathbb H^N.$

\vskip 2ex
\centerline{\bf R\'esum\'e}
\vskip 1ex

Nous consid\'erons la diffusion non lin\'eaire d'une substance dans un r\'ecipient (pas n\'ecessairement born\'e) avec 
fronti\`ere born\'ee de classe $ C^2 $. 
Supposons qu'initialement, le r\'ecipient soit vide et, \`a sa fronti\`ere, la densit\'e de la substance soit 
gard\'ee \`a tout moment \'egale \`a $1.$  Nous montrons que, si le r\'ecipient contient un sous-domaine $C^2$ propre
\`a la fronti\`ere duquel la substance est gard\'ee \`a tout moment \`a densit\'e constante, alors la fronti\`ere du r\'ecipient doit \^etre une sph\`ere. Nous consid\'erons aussi la diffusion non lin\'eaire dans tout $\mathbb R^N $ d'une substance  dont la densit\'e est initialement une fonction caract\'eristique du compl\'ementaire d'un domaine ayant la fronti\`ere born\'e et $ C^2$,  et nous obtenons des r\'esultats semblables. 
Ces r\'esultats sont aussi g\'en\'eralis\'es au 
cas du flux de chaleur dans la sph\`ere $\mathbb  S^N $ et l'espace hyperbolique $ \mathbb  H^N. $
\end{abstract}
%\vskip.2cm

\begin{keywords}
Nonlinear diffusion equation,
overdetermined problems, stationary level surfaces.
\end{keywords}

\begin{AMS}
Primary 35K60; Secondary 35B40, 35B25.
\end{AMS}

\pagestyle{plain}
\thispagestyle{plain}
\markboth{R. MAGNANINI AND S. SAKAGUCHI}{Nonlinear Diffusion}

%%%%%%%%%%%%%%%%%%%%%%%%%%%%%%%%%%%%%%
%%%%%  Section 1 (Introduction)  begins %%%%%%%%%%%%%%%%%%%%%%%
%%%%%%%%%%%%%%%%%%%%%%%%%%%%%%%%%%%%%%

\section{Introduction}
\subsection{Background}
\label{subs:1}

 In the paper \cite{MS3}, we considered
 the solution $u=u(x,t)$ of the following initial-boundary value problem for the heat equation:
\begin{eqnarray}
&u_t=\Delta u\ \ &\mbox{in }\ \Omega\times (0,+\infty),\label{heat}\\
&u=1\ \ &\mbox{on }\ \pa\Omega\times (0,+\infty),\label{dirichlet}\\
&u=0\ \ &\mbox{on }\ \Omega\times \{0\},\label{initial}
\end{eqnarray}
where $\Omega$ is a bounded domain in $\mathbb R^N$ with $N \ge 2$, and we obtained
the following symmetry result.

%%                                        %%
%%  Theorem A begins. %%
%%                                         %%

\renewcommand{\thetheorem}{A}

\begin{theorem}{\rm (\cite{MS3})}
\label{th:matzoh}
Let $\Omega$ be a bounded domain in $\mathbb R^N, \ N\ge 2,$
satisfying the exterior sphere condition
and suppose that $D$ is a domain, with boundary $\pa D,$
satisfying the interior cone condition,
and such that $\overline{D}\subset\Omega.$
\par
Assume that the solution
$u$ of problem {\rm (\ref{heat})-(\ref{initial}) } 
is such that
\begin{equation}
\label{isotherm}
u(x,t)=a(t),\ \ (x,t)\in \pa D\times (0,+\infty),
\end{equation}
for some function $a: (0,+\infty)\to (0,+\infty).$
Then $\Omega$ must be a ball.
\end{theorem}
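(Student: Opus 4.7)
The plan is to reduce this overdetermined problem to a classical geometric rigidity theorem via short-time asymptotic analysis of the heat equation. The key tool is Varadhan's heat-kernel asymptotic formula, which for the solution of \eqref{heat}--\eqref{initial} asserts
$$
-4t\log u(x,t)\longrightarrow d(x,\partial\Omega)^2\quad\text{as }t\to 0^+,
$$
uniformly on compact subsets of $\overline\Omega$. The first step is to apply this on $\partial D$: since by \eqref{isotherm} the value $u(x,t)=a(t)$ depends only on $t$ for $x\in\partial D$, Varadhan's formula forces the distance function $x\mapsto d(x,\partial\Omega)$ to be constant on $\partial D$, equal to some $R>0$. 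Thus $\partial D$ is contained in the inner parallel surface at distance $R$ from $\partial\Omega$, and under the exterior sphere condition on $\Omega$ and the interior cone condition on $D$ the nearest-point projection from $\partial\Omega$ to $\partial D$ is a well-defined correspondence.

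The second step is to extract a \emph{curvature} constraint on $\partial\Omega$. For this I would use a refined short-time expansion of $u$ along inward normal rays from $\partial\Omega$: writing points as $x=y-s\nu(y)$ for $y\in\partial\Omega$ and small $s>0$, the solution admits an asymptotic profile whose Gaussian leading term recovers Varadhan's formula and whose next-order correction involves the principal curvatures $\kappa_1(y),\ldots,\kappa_{N-1}(y)$ of $\partial\Omega$ at $y$ through the Jacobian $\prod_i(1-s\kappa_i(y))$ of the normal map. Evaluating this expansion at $s=R$ for each $y\in\partial\Omega$ and using that $u(\,\cdot\,,t)$ takes the \emph{same} value $a(t)$ there, one obtains that a symmetric function of the principal curvatures of $\partial\Omega$ is constant along $\partial\Omega$.

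Finally, I would invoke Aleksandrov's soap-bubble theorem: a compact connected $C^2$ hypersurface in $\mathbb R^N$ with constant (mean, or appropriate elementary symmetric) curvature must be a sphere, whence $\Omega$ is a ball.

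The main obstacle is the second step: Varadhan's formula alone only yields a \emph{zero-th order} identity (parallelism of $\partial D$ to $\partial\Omega$), which by itself is far from sphericity. One must refine the asymptotic expansion to sub-leading order and justify it uniformly up to the boundary under the stated low regularity ($\partial\Omega$ merely $C^2$ with exterior sphere condition, $\partial D$ only satisfying an interior cone condition). Constructing the barrier functions and controlling the remainder in this expansion, so that the curvature identity actually survives passage to $t\to 0^+$, is the delicate technical heart of the argument.
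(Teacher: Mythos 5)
Your overall skeleton --- Varadhan's short-time asymptotics to get parallelism of $\partial D$ to $\partial\Omega$ at distance $R$, then a curvature constraint, then Aleksandrov's sphere theorem --- is indeed the paper's strategy. But the middle step is where the paper's proof lives, and your proposal both omits the paper's actual device and leaves a gap you yourself flag.

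The paper's proof of this theorem uses a \emph{balance law} for caloric functions (from \cite{MS1}, \cite{MS2}): a solution $v$ of the heat equation vanishes at $x_0$ for all $t>0$ iff $\int_{\partial B_r(x_0)} v\,dS=0$ for all admissible $r$ and all $t$. This enters twice, and both uses are absent from your plan. First, taking $v=u_{x_i}$, it shows that at each point of $\partial D$ the spatial gradient $\nabla u(\cdot,t_0)$ is nonzero for some $t_0$; since $u(\cdot,t_0)$ is real-analytic in $\Omega$, this forces $\partial D$ to be analytic, and then parallelism forces $\partial\Omega$ to be analytic too. This step is essential: under the stated hypotheses $\partial\Omega$ is only assumed to satisfy an exterior sphere condition and $\partial D$ an interior cone condition, so the principal curvatures $\kappa_j$ you want to feed into Aleksandrov are not even defined a priori. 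Your proposal never addresses how $\partial\Omega$ acquires the $C^2$ (in fact analytic) regularity that the soap-bubble theorem requires. Second, applied to $v(x,t)=u(x+p,t)-u(x+q,t)$ with $p,q\in\partial D$ the feet of the normals from $P,Q\in\partial\Omega$, the balance law converts the pointwise overdetermination $u=a(t)$ on $\partial D$ into an \emph{integral} identity $\int_{B_R(p)}u\,dx=\int_{B_R(q)}u\,dx$, and it is to this averaged quantity (not to a pointwise subleading expansion along a single normal ray) that the boundary-layer asymptotics of \cite{MS4} apply, producing $\prod_j\bigl(\tfrac1R-\kappa_j(P)\bigr)=\prod_j\bigl(\tfrac1R-\kappa_j(Q)\bigr)$. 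Your proposed pointwise normal-coordinate expansion is precisely the ``delicate technical heart'' you acknowledge you cannot complete; the balance law is what makes that heart unnecessary. In short: the endpoints of your argument match the paper, but the bridge --- and the regularity bootstrap that makes the bridge meaningful --- is missing.
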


\renewcommand{\thetheorem}{\arabic{section}.\arabic{theorem}}
 \setcounter{theorem}{0}

A surface satisfying \eqref{isotherm} is said to be a {\it stationary isothermic surface.} 
We recall that $\Omega$ satisfies the {\it exterior sphere condition}
if for every $y\in\pa\Omega$ there exists a ball $B_r(z)$ such that
$\overline{B_r(z)}\cap\overline{\Omega}=\{y\},$ where $B_r(z)$ denotes
an open ball centered at $z \in \RE^N$ and with radius $r > 0.$ Also,
$D$ satisfies the
{\it interior cone condition} if for every $x \in \pa D$ there exists a
finite right spherical cone $K_x$
with vertex $x$ such that $K_x \subset \overline{D}$ and $\overline{K_x}
\cap \pa D = \{x\}.$

In order to better understand the background of the present paper, 
we outline  the proof of Theorem \ref{th:matzoh} improved by a result in
\cite{MS4}. The proof is essentially based on three ingredients.

The first one is a result of Varadhan's \cite{Va} which states
that, as $t\to 0^+,$ the function
$-4t\log u(x,t)$ converges uniformly
on $\overline\Omega$ to the function $d(x)^2$, where
\begin{equation*}
\label{distance}
d(x)=\dist(x,\pa\Omega),\ \ x\in\Omega.
\end{equation*}
To apply this result one needs the boundary $\partial\Omega$ be also the boundary of the exterior $\mathbb R^N \setminus \overline{\Omega}$. The assumption that $\Omega$ satisfies the exterior sphere condition is sufficient 
for that to happen. Hence, by (\ref{isotherm}) there exists $R > 0$ satisfying
\begin{equation}
\label{parallel}
d(x) = R\ \mbox{ for every }\ x \in \partial D.
\end{equation}

The second ingredient consists of a {\it balance law} proved in \cite{MS1} and \cite{MS2} (see \cite{MS3} for another proof). It states that, in any domain $G$ in $\RE^N,$ a solution $v=v(x,t)$ 
of the heat equation is zero at some point $x_0\in G$ {\it for every} $t > 0$ if and only if
\begin{equation}
\label{eq:secondbalance}
\int\limits_{\partial B_r(x_0)}v(x,t)\ dS_x=0,\
\mbox{for every } r\in [0,\dist(x_0,\partial G)) \mbox{ and } t > 0.
\end{equation}
\par
We use \eqref{eq:secondbalance} in two different ways. In the former one,
we choose $G=\Om$ and $v=u_{x_i},$ $i=1,\dots, N,$ and obtain, by some manipulations, that
the gradient $\na u$ is zero at some point $x_0\in\Om$ for every $t>0$ if and only if 
\begin{equation}
\label{eq:balance}
\int\limits_{\pa B_r(x_0)}(x-x_0) u(x,t)\ dS_x=0,\
\mbox{for every } r\in [0,d(x_0)) \mbox{ and } t > 0.
\end{equation}
This condition helps us show that both $\pa D$ and $\pa\Om$ must be analytic.
Indeed, with the aid of the interior cone condition for $D$, 
by combining (\ref{eq:balance}) and (\ref{parallel}) with the short-time behavior of $u$ 
described in Varadhan \cite{Va}, we can see that for every point $x_0 \in \partial D$ there exists a time $t_0>0$ satisfying $\nabla u(x_0,t_0) \not=0$; this implies that $\partial D$ is analytic. 
Thus, by using the exterior sphere condition for $\Omega$ again, we can conclude  that $\partial\Omega$ is analytic and parallel to $\partial D$. 
\par
In the latter way of using \eqref{eq:secondbalance}, we choose two distinct points
$P, Q \in \partial\Omega$ and let $p, q \in \partial D$ be the points such that
$$
\overline{B_R(p)}\cap\partial\Omega = \{P\}\ \mbox{ and }\ \overline{B_R(q)}\cap\partial\Omega = \{Q\}.
$$
Thence, we consider the function $v = v(x,t)$ defined by
$$
v(x,t) = u(x+p,t) - u(x+q,t)\ \mbox{ for }\ (x,t) \in B_R(0) \times (0,+\infty).
$$
Since $v$ satisfies the heat equation and $v(0,t) = a(t)-a(t)= 0$ for every $t > 0$, it follows from (\ref{eq:secondbalance}) that 
$$
t^{-\frac {N+1}4} \int_{B_R(p)} u(x,t)\ dx = t^{-\frac {N+1}4} \int_{B_R(q)} u(x,t)\ dx\ \mbox{ for every }\ t > 0.
$$
Therefore, by taking advantage of the boundary layer for $u$ for short times, we let $t \to 0^+$ and
by using a result in \cite{MS4}, we obtain that
\begin{equation}
\label{prodcurv}
C(N)\left\{\prod\limits_{j=1}^{N-1}\left[\frac1R -\ka_j(P)\right]\right\}^{-\frac 12}=C(N)\left\{\prod\limits_{j=1}^{N-1}\left[\frac1R -\ka_j(Q)\right]\right\}^{-\frac 12},
\end{equation}
where $\ka_j(x), j=1,\dots, N-1,$ denotes the $j$-th principal
curvature of the surface $\pa\Omega$ at the point $x\in\pa\Omega$ with respect to the inward unit normal vector to $\partial\Omega$, and where $C(N)$ is a positive constant depending only on $N$ (see \cite{MS4}, Theorem 4.2).
\par
With \eqref{prodcurv} in hand, we are ready to use our third ingredient:
Aleksandrov's sphere theorem \cite[p. 412]{Alek}. (A special case of this theorem is the well-known {\it Soap-Bubble Theorem} (see also \cite{Reil}).) Since \eqref{prodcurv} implies that
$\prod\limits_{j=1}^{N-1}\left[\frac1R -\ka_j(x)\right]$ is constant for $x \in \partial\Omega$, 
by applying Aleksandrov's sphere theorem, we conclude that $\partial\Omega$ must be a sphere (see \cite{MS3} and \cite{MS4} for details).

\subsection{Main results}

In the present paper, we extend and improve the results described in subsection \ref{subs:1} 
to the case of certain {\it nonlinear} diffusion equations. It is evident that 
the introduction of a nonlinearity immediately rules out the use of our second ingredient,
e.g. the balance law. 
\par
Since this was crucial to prove the necessary regularity of $\pa\Om,$
we will have to change our assumptions on the domain $\Om.$ 
Thus, we shall assume $\Omega$ to be a domain  (not necessarily bounded) in $\mathbb R^N, N \ge 2,$ 
having {\it bounded} boundary of class $C^2,$ that is, $\pa\Om$ consists of $m\ (m \ge 1)$ 
connected components $S_1, \cdots, S_m \subset \partial\Omega$  which are the boundaries of  
bounded $C^2$-domains $G^1, \cdots. G^m$ in $\mathbb R^N$, respectively. Thus 
\begin{equation}
\label{domains we consider}
\partial\Omega = \bigcup_{j=1}^m S_j \ \mbox{ and }\ S_j = \partial G^j \mbox{ for each } j \in \{ 1, \cdots, m\}.
\end{equation}
\par
It should also be noticed that the lack of a balance law precludes the proof
of property \eqref{prodcurv} (unless we find an alternative proof) and hence
Aleksandrov's sphere theorem cannot be put in action.
We shall overcome this difficulty by a new and more direct proof of symmetry 
only based on our first ingredient (conveniently modified in Theorem \ref{th:varadhan-nonlinear}) and 
Serrin's {\it method of moving planes} (see \cite{Ser}, \cite{Reic}, \cite{Sir}). It is worth mentioning that our proof 
does not need  Serrin's {\it corner lemma} but simply uses the strong maximum principle
and Hopf boundary lemma
(see Theorems \ref{th:matzoh-nonlinear} and \ref{th:matzoh-nonlinear-unbounded}).
\par
We now set up our framework. We consider the unique bounded solution 
$u=u(x,t)$ of the nonlinear diffusion equation
\begin{eqnarray}
u_t =\Delta \phi(u)\quad\mbox{ in }\ &\Omega\times (0,+\infty),\label{nonlinear-eq}
\end{eqnarray}
subject to conditions \eqref{dirichlet} and \eqref{initial}. Here $\phi: \mathbb R\to \mathbb R$ is such that 
\begin{eqnarray}
&&\phi\in  C^2(\mathbb R), \quad \phi(0) =0, \mbox{ and } \label{smoothness}
\\
&&0 < \delta_1 \le \phi^\prime(s) \le \delta_2 \ \mbox{ for } s \in \mathbb R,\label{uniformly-parabolic}
\end{eqnarray}
where $\delta_1, \ \delta_2$ are positive constants.
By the maximum principle we know that
\begin{equation*}
\label{bounded}
0 < u < 1\ \mbox{ in }\ \Omega \times (0,+\infty).
\end{equation*}
\par
Let $\Phi = \Phi(s)$ be the function defined by
\begin{equation}
\label{large-phi}
\Phi(s) = \int_1^s \frac {\phi^\prime(\xi)}\xi\ d\xi\ \mbox{ for }\ s > 0.
\end{equation}
Note that if $\phi(s) \equiv s$, then $\Phi(s) = \log s$. 
\par
We extend Varadhan's result to our setting by the following

%%                                        %%
%%  Theorem 1.1 begins. %%
%%                                         %%

\begin{theorem}
\label{th:varadhan-nonlinear} 
Let $u$ be the solution of problem \eqref{nonlinear-eq}, \eqref{dirichlet}-\eqref{initial}.
\par
Then, 
$$
\lim_{t\to 0^+} -4t\Phi(u(x,t))=d(x)^2
$$
uniformly on every compact set in $\Om.$
\end{theorem}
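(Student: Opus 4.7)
The strategy is to extend Varadhan's original asymptotic argument by constructing radially symmetric sub- and supersolutions of \eqref{nonlinear-eq}, thereby reducing the computation to a one-dimensional self-similar problem, which is made tractable by the uniform parabolicity \eqref{uniformly-parabolic}.

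First I would analyze the self-similar solution of the one-dimensional half-line problem $U_t = (\phi(U))_{xx}$ on $x > 0$ with $U(0,t) = 1$ and $U(x,0) = 0$. By scaling invariance, $U(x,t) = F(x/\sqrt{t})$, where the profile $F:[0,\infty)\to[0,1]$ is strictly decreasing and satisfies the ODE $(\phi(F))'' + \tfrac{\eta}{2}F' = 0$ with $F(0) = 1$, $F(\infty) = 0$. Setting $\psi = \phi(F)$ and exploiting $\phi'(F(\eta)) \to \phi'(0) > 0$ as $\eta \to \infty$, comparison of $\psi$ with solutions of the limiting constant-coefficient linear equation yields the Gaussian-type decay
$$\psi(\eta) \sim c\,\eta^{-1} e^{-\eta^2/(4\phi'(0))} \quad \text{as } \eta \to \infty.$$
Combined with the elementary expansion $\Phi(s) = \phi'(0)\log s + O(1)$ as $s \to 0^+$ (immediate from $\Phi'(s) = \phi'(s)/s$ and $\phi'(0) > 0$), this produces the key one-dimensional asymptotic $\Phi(F(\eta)) = -\eta^2/4 + O(\log\eta)$, and hence $-4t\Phi(U(x,t)) \to x^2$ as $t \to 0^+$, uniformly on compact subsets of $(0,+\infty)$.

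Next, for any $x_0 \in \Omega$ with $d_0 = d(x_0)$, I would sandwich $u(x_0,t)$ between two radially symmetric auxiliary solutions of \eqref{nonlinear-eq} with the same boundary value $1$ and initial value $0$. The \emph{upper barrier} $\bar u$ is solved in the interior ball $B_{d_0}(x_0) \subset \Omega$; the parabolic comparison principle (applicable since $u_t = \phi'(u)\Delta u + \phi''(u)|\nabla u|^2$ is uniformly parabolic with smooth coefficients) gives $\bar u \ge u$ on $B_{d_0}(x_0)$. For the \emph{lower barrier}, I choose a nearest point $P \in \partial\Omega$ to $x_0$; the $C^2$ regularity of $\partial\Omega$ provides an exterior tangent ball $B_r(z) \subset \mathbb R^N \setminus \overline\Omega$ meeting $\overline\Omega$ only at $P$, with $x_0$ on the ray from $z$ through $P$ at distance $r + d_0$ from $z$. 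Solving \eqref{nonlinear-eq} in $\mathbb R^N \setminus \overline{B_r(z)}$ gives a radially symmetric $\underline u$ with $\underline u \le u$ on $\Omega$ by comparison (using that $\Omega \subset \mathbb R^N \setminus \overline{B_r(z)}$, that both vanish at $t=0$ and at infinity, and that $u = 1 \ge \underline u$ on $\partial\Omega$). Both $\bar u$ and $\underline u$ satisfy one-dimensional radial equations with a lower-order curvature term $\tfrac{N-1}{s}(\phi(\cdot))_s$; sandwiching these radial solutions between time-dilated copies $F(s/\sqrt{(1\pm\varepsilon)t})$ of the half-line profile (where $\varepsilon$ absorbs the bounded curvature contribution) shows that the asymptotic from the first step persists at the base point, so that
$$-4t\Phi(\bar u(x_0,t)) \to d_0^2 \quad\text{and}\quad -4t\Phi(\underline u(x_0,t)) \to d_0^2 \quad\text{as } t \to 0^+.$$
Monotonicity of $\Phi$ then squeezes $-4t\Phi(u(x_0,t))$ to the same limit.

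Local uniform convergence on compact subsets of $\Omega$ follows because $d(x_0)$ and the selected exterior tangent sphere depend continuously on $x_0$ when $\partial\Omega$ is $C^2$, and the one-dimensional asymptotic of Step 1 is uniform on compact ranges of the self-similar variable. I expect the main technical obstacle to be the sharp control of the curvature correction in the radial auxiliary problems: verifying that the $(N-1)/s$ term leaves the leading-order Varadhan asymptotic undisturbed requires constructing appropriately perturbed self-similar barriers and checking that the perturbation is absorbed by the $O(\log\eta)$ error, which in turn demands a quantitative version of the ODE analysis of $F$ near $\eta = \infty$ (for instance by bootstrapping from the linearized Gaussian estimate using the $C^2$ smoothness of $\phi$).
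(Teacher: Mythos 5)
Your proposal is a genuinely different route from the paper's: the paper rescales $t\mapsto\varepsilon t$, sets $v^\varepsilon=-\varepsilon\Phi(u^\varepsilon)$, sandwiches $\phi(u)$ between \emph{linear} heat solutions with diffusivities $\delta_1,\delta_2$ (Lemma \ref{le:comparison}) to get the two-sided bound of Lemma \ref{le:uniform-estimate}, then proves a Bernstein gradient bound and a Gilding-type H\"older bound, passes to a limit by Ascoli--Arzel\`a, shows the limit is a viscosity solution of the eikonal equation $V_t=-|\nabla V|^2$, and identifies it by Str\"omberg's uniqueness for the Hopf--Lax formula. You instead propose a classical comparison with the self-similar half-line profile $F$ and radially symmetric barriers in $B_{d_0}(x_0)$ and $\mathbb R^N\setminus\overline{B_r(z)}$. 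This is attractive because it avoids viscosity-solution machinery, and in spirit it is what the paper itself does in Section \ref{section4} for the Cauchy problem (Lemma \ref{le: a boundary value problem for ODE} constructs exactly such a shifted profile). But your outline, as it stands, does not close.

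The technical heart — verifying that $F\big(s/\sqrt{(1\pm\varepsilon)t}\big)$ is a sub- or supersolution of the radial problems — actually fails, and the failure is not of the ``$O(\log\eta)$'' kind you hope to absorb. Write the exterior-ball equation in the variable $s'=\rho'-r$: $\underline u_t=\phi(\underline u)_{s's'}+\frac{N-1}{r+s'}\phi(\underline u)_{s'}$. For $\tilde w(s',t)=F\big(s'/\sqrt{(1-\varepsilon)t}\big)$ one computes, using $(\phi\circ F)''=-\tfrac\eta2 F'$,
\begin{equation*}
\tilde w_t-\phi(\tilde w)_{s's'}-\frac{N-1}{r+s'}\phi(\tilde w)_{s'}
= F'(\eta)\left[\frac{\eta}{2t}\,\frac{\varepsilon}{1-\varepsilon}-\frac{(N-1)\,\phi'(F)}{(r+s')\sqrt{(1-\varepsilon)t}}\right],
\end{equation*}
and since $F'<0$ the subsolution inequality requires the bracket to be nonnegative, i.e.\ $s'(r+s')\ge \tfrac{2t(1-\varepsilon)(N-1)}{\varepsilon}\phi'(F)$, which \emph{fails in the entire boundary layer} $s'=O(t)$ — precisely the region that governs the short-time asymptotics. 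Note also that the dilation $(1\pm\varepsilon)$ perturbs the leading term $-\eta^2/4$ in $\Phi(F(\eta))$ by $O(\varepsilon\eta^2)$, not by $O(\log\eta)$, so there is nothing for the logarithmic error to ``absorb''; what you actually need is a spatial shift of the profile (as in the paper's $f(\xi)=h(\xi+2\delta)$), which gives a genuine $\frac{\delta}{t}f'$ margin against the curvature term. A further issue is the upper barrier: the profile $F\big((d_0-\rho)/\sqrt{(1+\varepsilon)t}\big)$ is not differentiable at the center $\rho=0$ of $B_{d_0}(x_0)$, where moreover the coefficient $\frac{N-1}{d_0-s}$ of the curvature term is unbounded, so it is not an admissible classical supersolution on the whole ball — exactly the point $x_0$ where you want the estimate. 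Finally, the sharp asymptotics $\Phi(F(\eta))=-\eta^2/4+O(\log\eta)$ is plausible but requires a quantitative ODE analysis that you only sketch; since $\Phi(s)-\phi'(0)\log s$ is bounded as $s\to0^+$ (using $\phi\in C^2$, $\phi'(0)\ge\delta_1>0$) this part can be made rigorous, but it does need to be done. In short, the overall strategy can likely be carried through, but the specific barrier construction you propose would fail, and closing the gap essentially leads you back to the shifted self-similar profiles of Lemma \ref{le: a boundary value problem for ODE} together with a separate treatment near the center of the interior ball.
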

\par
The proof of this theorem is constructed by adapting well-known results 
of the theory of viscosity solutions (\cite{CrIL}, \cite{Koi}, \cite{EI}, \cite{ES}, \cite{LSV}).
The techniques developed to prove Theorem \ref{th:varadhan-nonlinear} 
can be used to extend this result to the important case in which the
homogeneous boundary condition \eqref{dirichlet} is replaced by 
the non-homogeneous one
\begin{eqnarray}
u=f \quad\quad\ \ \ \ \mbox{ on } &\partial\Omega\times
(0,+\infty),\label{nonlinear-Dirichlet-2}
\end{eqnarray}
where $f = f(x)$ is a continuous function on $\pa\Om,$ 
bounded from above and away from zero by positive constants 
(see Theorem \ref{th:varadhan-nonlinear-2}).
\par
The following symmetry result corresponds
to Theorem \ref{th:matzoh} and Theorem 3.1 in \cite{MS5}.
%%                                        %%
%%  Theorem 1.2 begins. %%
%%                                        %%

\begin{theorem}
\label{th:matzoh-nonlinear} 
Let $D$ be a  $C^2$ domain in $\mathbb R^N$ satisfying $\overline{D} \subset \Omega$. 
Assume that the solution
$u$ of problem \eqref{nonlinear-eq}, \eqref{dirichlet}-\eqref{initial}, satisfies
\eqref{isotherm}.
\par
Then $m=1$ and $\partial\Omega$ must be a sphere.
\end{theorem}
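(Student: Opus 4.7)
The plan is to extract the geometric consequence $d \equiv R$ on $\partial D$ from Theorem~\ref{th:varadhan-nonlinear} and then run Serrin's method of moving planes in every direction, using the level surface $\partial D$ (in place of Serrin's overdetermined Neumann condition) as the mechanism that forces the reflected solution to agree with the original. First, applying Theorem~\ref{th:varadhan-nonlinear} to both sides of \eqref{isotherm} one obtains
$$
d(x)^2 = \lim_{t\to 0^+} -4t\,\Phi(a(t)) \quad \text{for every } x \in \partial D,
$$
so $d(x) = R$ is a positive constant on $\partial D$. A short closest-point segment argument then gives $d \ge R$ on $\overline D$, so $\partial D$ sits in the level set $\{d = R\}$ and $\overline D$ lies on the inside of this parallel surface.

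Next, fix a unit vector $e$, set $T_\lambda = \{x\cdot e = \lambda\}$ with reflection $\pi_\lambda$, and write $\Omega^\lambda_+ = \Omega \cap \{x\cdot e > \lambda\}$, $A_\lambda = \pi_\lambda(\Omega^\lambda_+)$. Slide $\lambda$ down from $+\infty$ until the first critical value $\lambda^*$ at which either (i) $A_{\lambda^*}$ is internally tangent to $\partial\Omega$ at some $P \notin T_{\lambda^*}$, or (ii) $T_{\lambda^*}$ is orthogonal to $\partial\Omega$ at some $P \in T_{\lambda^*} \cap \partial\Omega$. Since \eqref{nonlinear-eq} is invariant under reflections, $\tilde u(x,t) := u(\pi_{\lambda^*}(x),t)$ solves the same equation in $A_{\lambda^*}$; its parabolic boundary data dominate those of $u$ ($\tilde u = u$ on the flat face, $\tilde u = 1 \ge u$ on the reflected curved face, zero initial data), so by the uniform parabolicity \eqref{uniformly-parabolic} and the comparison principle $w := \tilde u - u \ge 0$ in $A_{\lambda^*} \times (0,+\infty)$. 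After the usual linearization, $w$ solves a linear uniformly parabolic equation; the strong maximum principle then gives either $w \equiv 0$ (i.e., $\Omega$ symmetric about $T_{\lambda^*}$) or $w > 0$ throughout the interior of $A_{\lambda^*}$.

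To rule out the strict case I use $\partial D$ as a witness. Via the parallel map $P \mapsto y(P) := P + R\,\nu(P)$, with $\nu(P)$ the inner unit normal to $\partial\Omega$, one verifies that $\partial D$ coincides with a union of components of the $C^2$ level set $\{d = R\}$, and the symmetry of the tangency at $P$ forces $\pi_{\lambda^*}(y(P)) = y(\pi_{\lambda^*}(P)) \in \partial D$. In case (i), $y(P)$ is an interior point of $A_{\lambda^*}$ and $w(y(P),t) = a(t) - a(t) = 0$, so the strong maximum principle gives $w \equiv 0$. In case (ii), $y(P) \in T_{\lambda^*}$ lies on the flat boundary of $A_{\lambda^*}$ with $w(y(P),t) = 0$; Hopf's boundary lemma would demand $\partial_e w(y(P),t) > 0$ unless $w \equiv 0$, but $\nabla u(y(P),t)$ is parallel to $\nu(P) \in T_{\lambda^*}$ (since $u \equiv a(t)$ on $\partial D$), so $\partial_e u(y(P),t) = 0$ and consequently $\partial_e w(y(P),t) = -2\partial_e u(y(P),t) = 0$, forcing $w \equiv 0$ again. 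In either case $\Omega$ is symmetric about $T_{\lambda^*}$; since $e$ was arbitrary, $\Omega$ is a ball, so in particular $m = 1$.

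The hard part is the geometric statement just used: that $\partial D$ coincides with a union of components of $\{d = R\}$ and that the parallel map $P \mapsto y(P)$ is well defined, single-valued and $C^1$. This reduces to showing that every point of $\partial D$ has a \emph{unique} closest point on $\partial\Omega$, which one extracts from $d \ge R$ on $\overline D$ together with the $C^2$-regularity of $\partial D$ and $\partial\Omega$. Once this is in hand, the symmetrization uses only the strong maximum principle and Hopf's lemma, with no recourse to Serrin's corner lemma, exactly as the authors emphasize.
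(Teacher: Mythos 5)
Your overall plan agrees with the paper's: derive $d\equiv R$ on $\partial D$ from Theorem~\ref{th:varadhan-nonlinear}, then run the method of moving planes, with the stationary level surface $\partial D$ --- rather than a Neumann overdetermination on $\partial\Omega$ --- serving as the witness that forces the reflected solution to coincide with the original, so that only the strong maximum principle and Hopf's lemma (and not Serrin's corner lemma) are needed. Your use of the point $y(P)\in\partial D$ at distance $R$ from $P$ in the two critical configurations is exactly how the paper uses its points $P^*$ and $Q^*$.

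The gap is that your moving-plane construction only works when $\Omega$ is bounded, while the standing hypotheses allow unbounded $\Omega$ (an exterior domain) --- and the paper in fact treats that case first. You reflect caps of $\Omega$ itself, setting $\Omega^\lambda_+=\Omega\cap\{x\cdot e>\lambda\}$ and $A_\lambda=\pi_\lambda(\Omega^\lambda_+)$ and sliding $\lambda$ down from $+\infty$. When $\Omega=\mathbb R^N\setminus\overline G$ with $G=\bigcup_j G^j$ bounded, for large $\lambda$ the set $\Omega^\lambda_+$ is essentially a half-space and $A_\lambda$ is the opposite half-space, which already contains $G$; so $A_\lambda\not\subset\Omega$ for any starting $\lambda$ and the sliding never gets going. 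Replacing $A_\lambda$ by $A_\lambda\cap\Omega$ does not help either: the boundary of that set also contains pieces of $\partial\Omega$ on the left of the plane, where $u=1\ge\tilde u$, so the boundary data now dominate in \emph{both} directions and the comparison principle gives nothing. The paper handles the exterior case by applying the reflections to the bounded complement $G$ (cutting caps $G_\lambda=G\cap\{x\cdot\ell>\lambda\}$, as in Reichel and Sirakov), and then performing the parabolic comparison not on a reflection of $\Omega$ but on a connected component $\Omega_\ell$ of $\Omega\cap\{x\cdot\ell<\lambda_*\}$; one checks that $x\in\Omega_\ell$ forces $x^{\lambda_*}\in\Omega$, so $w(x,t)=u(x^{\lambda_*},t)$ is defined there, and the comparison gives $w<u$. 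You need this separate construction in the unbounded case (the bounded case, which is the one your setup really addresses, is then a simpler variant). A small sign remark: with $A_{\lambda^*}\subset\{x\cdot e<\lambda^*\}$ the inner normal at $y(P)\in T_{\lambda^*}$ is $-e$, so Hopf's lemma would force $\partial_e w(y(P),t)<0$, not $>0$; the contradiction with $\partial_e w=0$ is of course the same.
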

\par
When $\Omega$ is limited to unbounded domains, we have
%%                                        %%
%%  Theorem 1.3 begins. %%
%%                                        %%
\begin{theorem}
\label{th:matzoh-nonlinear-unbounded} 
Let $D$ be a  $C^2$ unbounded domain in $\mathbb R^N$ satisfying $\overline{D} \subset \Omega$. 
\par
Assume that, for any connected component $\Gamma$ of $\partial D$, the solution
$u$ of problem \eqref{nonlinear-eq}, \eqref{dirichlet}-\eqref{initial}, satisfies
the following condition:
\begin{equation}
\label{level-invariant}
u(x,t)=a_\Gamma(t),\ \ (x,t)\in \Gamma\times (0,+\infty),
\end{equation}
for some function $a_\Gamma: (0,+\infty)\to (0,+\infty).$
\par
Then $m=1$ and $\partial\Omega$ must be a sphere.
\end{theorem}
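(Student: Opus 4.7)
The plan is to follow the scheme of the proof of Theorem~\ref{th:matzoh-nonlinear} for the bounded case, exploiting that even in the present setting $\pa\Om$ is bounded: the complement $K=\RE^N\setminus\Om$ is compact, and Serrin's method of moving planes can be run on it. The first ingredient is Theorem~\ref{th:varadhan-nonlinear}. Fixing a connected component $\Gamma$ of $\pa D$ and using that by \eqref{level-invariant} the value $u(x,t)=a_\Gamma(t)$ does not depend on $x\in\Gamma$, pointwise application of Theorem~\ref{th:varadhan-nonlinear} gives
\[
d(x)^2=\lim_{t\to 0^+}-4t\,\Phi(a_\Gamma(t)),
\]
a constant $R_\Gamma^2$ independent of $x\in\Gamma$. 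Hence $d\equiv R_\Gamma$ on $\Gamma$, so $\Gamma$ lies on the parallel hypersurface $\{d=R_\Gamma\}$ of $\pa\Om$, which is contained in the bounded $R_\Gamma$-tubular neighbourhood of $\pa\Om$. In particular each connected component of $\pa D$ is a bounded closed $C^2$ hypersurface, even though $D$ itself is unbounded.

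Next I would run the method of moving planes on the bounded set $K$. Fix $\omega\in\bfS^{N-1}$, let $T_\la=\{x\cdot\omega=\la\}$ and $\Pi_\la$ be the associated reflection, and let $\la^*=\la^*(\omega)$ be the critical value obtained by decreasing $\la$ from $\la_0=\max_{x\in K}x\cdot\omega$ until $\Pi_\la(K\cap\{x\cdot\omega>\la\})$ first fails to be compactly contained in $K$, either by internal tangency with $K$ at some $p^*\notin T_{\la^*}$ or by orthogonality of $T_{\la^*}$ with $\pa\Om$ at some $p^*\in T_{\la^*}\cap\pa\Om$. The reflected solution $u^*(x,t)=u(\Pi_{\la^*}(x),t)$ solves \eqref{nonlinear-eq} on the reflected half of $\Om$ with zero initial datum, $u^*=u$ on $T_{\la^*}\cap\Om$, and $u^*=1$ on the reflected image of $\pa\Om$. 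Using \eqref{smoothness}--\eqref{uniformly-parabolic} and the uniform bound $0<u,u^*<1$, a parabolic comparison principle on the overlap $\Om\cap\{x\cdot\omega<\la^*\}$ yields $u\ge u^*$, and the difference $w=u-u^*\ge 0$ satisfies a linear uniformly parabolic equation in divergence form with bounded coefficients.

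The main obstacle is to upgrade the inequality $w\ge 0$ to $w\equiv 0$. At the critical contact point $p^*\in\pa\Om$ one has $w(p^*,t)=0$ because both $u$ and $u^*$ attain the boundary value $1$ there, so the strong maximum principle and the Hopf boundary lemma applied to $w$ at $p^*$ must produce a contradiction unless $w\equiv 0$; the delicate part is to carry this out using only the strong maximum principle and Hopf's lemma rather than Serrin's corner lemma, exactly as announced before Theorem~\ref{th:matzoh-nonlinear}. The first step of the plan enters here as a compatibility check: the reflected images of the components of $\pa D$ meeting the overlap are themselves contained in parallel surfaces of the reflected $\pa\Om$, which is consistent with the propagation of $w\equiv 0$ across $\pa D$ once one interior zero of $w$ is produced. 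Once $u\equiv u^*$ is established, $\Om$ is symmetric with respect to $T_{\la^*(\omega)}$ for every $\omega\in\bfS^{N-1}$, and the boundedness of $K$ forces all such symmetry hyperplanes to meet in a common centre, whence $K$ is a closed ball; therefore $m=1$ and $\pa\Om$ is a sphere.
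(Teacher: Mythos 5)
Your first step is correct and matches the paper: Theorem~\ref{th:varadhan-nonlinear} together with \eqref{level-invariant} shows that each connected component $\Gamma$ of $\partial D$ lies on a level set $\{d=R_\Gamma\}$, hence is a compact $C^2$ hypersurface parallel to a component of $\partial\Omega$ (and the hypothesis that $D$ is \emph{unbounded} is what guarantees these pairings are one-to-one, i.e.\ $\partial D$ and $\partial\Omega$ match up into $m$ parallel pairs, possibly at different distances $R_\Gamma$; you state this implicitly but do not flag why unboundedness matters). Running the moving planes on the compact complement $K=\RE^N\setminus\Omega$ (or, in the paper's notation, on the bounded open set $G$) and defining $w=u-u^*$ with $w\ge 0$ on the overlap is also correct.

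The gap is in how you try to close the argument. You attempt to extract the contradiction at the critical boundary point $p^*\in\partial\Omega$, reasoning that $w(p^*,t)=0$ since both $u$ and $u^*$ take the boundary value $1$ there, and then invoking the strong maximum principle and Hopf's lemma at $p^*$. But at $p^*\in\partial\Omega$ there is no overdetermined condition: \eqref{dirichlet} prescribes only the Dirichlet value, so the vanishing of $w$ at $p^*$ and the nonvanishing of its normal derivative (which Hopf gives) contradict nothing. In Serrin's original problem this step works because the Neumann data on $\partial\Omega$ is also prescribed, and case~(ii) then requires the corner lemma at $\partial\Omega\cap\pi_{\lambda_*}$; here neither of those is available, and your sentence about producing ``a contradiction unless $w\equiv 0$'' without the corner lemma is exactly the unresolved issue. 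The paper's key point, announced just before Theorem~\ref{th:matzoh-nonlinear}, is that the overdetermination sits \emph{inside} $\Omega$: one uses \eqref{level-invariant} at points of $\partial D$, not at $\partial\Omega$. Concretely, with $P$ (resp.\ $Q$) the tangency (resp.\ orthogonality) point on $\partial\Omega$, one picks $P^*,Q^*\in\partial D$ at distance $R_\Gamma$ from $P,Q$. In case~(i), $P^*$ lies in the interior of $\Omega_\ell$, and \eqref{level-invariant} forces $w(P^*,t)=u(P^*,t)$, contradicting the strict inequality $w<u$ in $\Omega_\ell$ from the strong comparison principle — no Hopf needed. In case~(ii), $Q^*\in\partial D\cap\pi_{\lambda_*}$, and \eqref{level-invariant} forces the tangential derivative $\partial u/\partial\ell(Q^*,t)=0$, whereas Hopf's lemma applied to $u-w$ across the \emph{flat} interface $\pi_{\lambda_*}$ (not a corner) gives $\partial u/\partial\ell(Q^*,t)<0$. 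Your plan mentions $\partial D$ only as a post hoc ``compatibility check,'' which misses that $\partial D$ is where the contradiction is generated; without that identification the moving-planes argument does not close.
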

\par
When $\phi(s)=s$ and $\Omega$ is bounded, Theorem \ref{th:matzoh} is clearly stronger than Theorem \ref{th:matzoh-nonlinear}, since
in the former we can use the balance law to infer better regularity. 
Furthermore, the same techniques used for the proof of Theorem \ref{th:matzoh} also yield
a more general version of it (see Theorem \ref{th:matzoh-annulus}).
\par
The paper is then organized as follows. In Section \ref{section2}, we prove 
all our symmetry results: Theorems \ref{th:matzoh-nonlinear}, \ref{th:matzoh-nonlinear-unbounded}
and \ref{th:matzoh-annulus}. In Section \ref{section3}, with the help of the theory of viscosity solutions, we prove Theorem  \ref{th:varadhan-nonlinear} and its extension, Theorem \ref{th:varadhan-nonlinear-2}. 
Section \ref{section4} is devoted to show similar results for the unique bounded solution of the  Cauchy problem for nonlinear diffusion equations. In Section \ref{section5}, we mention that this kind of results also hold 
for the heat flow in the sphere $\mathbb S^N$ and the hyperbolic space $\mathbb H^N$ with $N \ge 2$.

%%%%%%%%%%%%%%%%%%%%%%%%%%%%%%%%%%%%%%
%%%%%  Section 2 begins %%%%%%%%%%%%%%%%%%%%%%%
%%%%%%%%%%%%%%%%%%%%%%%%%%%%%%%%%%%%%%

\setcounter{equation}{0}
\setcounter{theorem}{0}

\section{Symmetry results.}
\label{section2}
In this section,  with the aid of Theorem \ref{th:varadhan-nonlinear},  by applying the method of moving planes to problem \eqref{nonlinear-eq}, \eqref{dirichlet}-\eqref{initial} directly, we prove Theorems \ref{th:matzoh-nonlinear} and \ref{th:matzoh-nonlinear-unbounded}. 

\begin{proofA}
 First of all, we consider the case where $\Omega$ is unbounded. 
In this case $\Omega$ is an exterior domain, that is, we have 
$$
\overline{G^i} \cap \overline{G^j} = \emptyset\ \mbox{ if }\ i \not=j, \, i, j=1,\dots, m, \ \mbox{ and }\ \Omega = \mathbb R^N \setminus \left(\bigcup_{j=1}^m\overline{G^j}\right)
$$
(see (\ref{domains we consider}) for the definitions of $S_j$ and $G^j$). Set
\begin{equation}
\label{domain outside}
G = \bigcup_{j=1}^m G^j.
\end{equation}
Then $G$ is a bounded open set in $\mathbb R^N$ having $m$ connected components $G^1, \cdots G^m$.
Theorem \ref{th:varadhan-nonlinear} and the assumption (\ref{isotherm}) yield (\ref{parallel}). 
Furthermore, with the aid of our $C^2$-smoothness assumption on $\partial D$ and $\partial\Omega$, we see that  
both $\pa\Om$ and $\pa D$ consist of $m$ connected closed hypersurfaces and each component of $\pa\Om$ 
is parallel, at distance $R,$ to only one component of $\pa D.$
 
We apply the method of moving planes to the open set $G$. 
The proof  runs similarly to those of Serrin's \cite{Ser} --- or Reichel's \cite{Reic} and Sirakov's \cite{Sir} 
for exterior domains --- but with the major difference that, here, 
since the relevant overdetermination takes place {\it inside} $\Om,$ Serrin's corner lemma --- an extension of Hopf
boundary lemma to domains with corners --- is not needed.  
\par
Let $\ell$ be a unit vector in $\mathbb R^N,$ $\lambda\in\mathbb R,$ and let $\pi_\lambda$ be the hyperplane $x\cdot\ell=\lambda.$
For large $\lambda,$ $\pi_\lambda$ will be disjoint from $\overline{G};$ as $\lambda$ decreases, 
$\pi_\lambda$ will intersect $\overline{G}$ and cut off from $G$ an open cap $G_\lambda$
(on the same side of $\lambda\to +\infty$). 
\par
Denote by $G_\lambda^\prime$ the reflection
of $G_\lambda$ in the plane $\pi_\lambda.$ At the beginning, $G_\lambda^\prime$ will be 
and remain in $G$ until one of the following occurs:
\begin{enumerate}
\item[(i)] $G_\lambda^\prime$ becomes internally tangent to $\partial G$
at some point $P$ not on $\pi_\lambda;$
\item[(ii)] $\pi_\lambda$ reaches a position in which it is orthogonal to $\partial G$
at some point $Q.$ 
\end{enumerate}
\par
Let $\lambda_*$ denote the (minimal) value of $\lambda$ at which the plane $\pi_\lambda$ reaches one of these positions and
suppose that $G$ is not symmetric with respect to $\pi_{\lambda_*}$. Let  
$\Omega_\ell$ be the connected component of $\Omega\cap\{ x \in \mathbb R^N : x\cdot\ell < \lambda_*\}$ whose boundary contains the points $P$ or $Q$ in
the respective cases (i) or (ii). Since, as already observed, $\partial\Omega$ 
and $\partial D$ consist of connected closed 
pairwise parallel hypersurfaces, we can find points  $P^*$ and $Q^*$ in $\partial D$ such that
$|P-P^*|$ or $|Q-Q^*|$ equal $R$, respectively, and we have that $P^* \in \Omega_\ell$ and $Q^* \in \partial\Omega_\ell\cap\pi_{\lambda_*}.$
\par
Let $x^\lambda=x+2[\lambda -(x\cdot\ell)]\ell$ denote the reflection
of a point $x\in\RE^N$ in the plane $\pi_\lambda.$ 
For $(x,t)\in\Omega_\ell\times(0,\infty),$ consider the function $w=w(x,t)$ defined by
$$
w(x,t) = u(x^{\lambda_*},t).
$$
Then it follows from \eqref{isotherm} that 
\begin{equation}
\label{iso-level}
w(P^*,t) = u(P^*,t)\ \mbox{ or }\ \frac {\partial u}{\partial \ell} (Q^*,t) = 0\ \mbox{ for all } t >0,
\end{equation}
where in the second equality we have used the fact that the vector $\ell$ is tangential also to $\partial D$ at $Q^*\in\partial D$.
\par
Observe that $w$ and $u$ satisfy
\begin{eqnarray*}
&w_t = \Delta \phi(w)\ \mbox{ and } u_t = \Delta \phi(u) \ &\mbox{ in }\ \Omega_\ell \times (0,+\infty),\label{2 porous equations}\\
&w=u\ \ &\mbox{ on }\ \left(\pa\Omega_\ell\cap \pi_{\lambda_*}\right)\times (0,+\infty),\label{on the plane}\\
& w < 1=u\ \ &\mbox{ on }\  \left(\pa\Omega_\ell\setminus \pi_{\lambda_*}\right)\times (0,+\infty), \label{not on the plane}\\
&w=u=0\ \ &\mbox{ on }\ \Omega_\ell\times \{0\}.\label{2 initial data}
\end{eqnarray*}
Hence, by the strong comparison principle, 
\begin{equation}
\label{strict inequality}
w < u\ \mbox{ in }\ \Omega_\ell \times (0,+\infty).
\end{equation}
Indeed, \eqref{strict inequality} can be obtained by applying the strong comparison principle to the bounded solutions 
$W =\phi(w)$ and $U = \phi(u)$ of $W_t =\frac 1{\psi^\prime(W)}\,\Delta W$ and $U_t =\frac 1{\psi^\prime(U)}\,\Delta U$, 
respectively; here, $\psi$ is the inverse function of $ \phi$. 
\par
If case (i) applies,  \eqref{strict inequality} contradicts the first equality in (\ref{iso-level}),
since $P^*\in\Omega_\ell.$ 
If case (ii) applies,  by using Hopf's boundary point lemma, we can infer that 
 $$
 \frac {\partial u}{\partial \ell} (Q^*,t) < 0\ \mbox{ for all } t >0,
 $$
 which contradicts the second equality in (\ref{iso-level}).
\par
 In conclusion, $G$ is symmetric for any direction $\ell \in \mathbb R^N$, and in view of the definition (\ref{domain outside}) of $G$, $m = 1$ and $G$ must be a ball. Namely, $\Omega$ is the exterior of a ball and $\partial\Omega$ must be a sphere.
\par
When $\Omega$ is bounded, it suffices to apply the method of moving planes directly to $\Omega$.
\end{proofA}
\vskip.2cm
\begin{proofB}
With the aid of the $C^2$ smoothness assumption of both $\partial D$ and $\partial\Omega$, Theorem \ref{th:varadhan-nonlinear} and the assumption \eqref{level-invariant}, together with the fact that $D$ is unbounded,  yield that $\partial\Omega$ and $\partial D$ consist of $m$ pairs of connected closed hypersurfaces being parallel to each other respectively.
(When $D$ is bounded, $\partial D$ may consist of two connected components being parallel to one component  of $\partial\Omega$. )
Hence, the proof runs similarly to that of Theorem \ref{th:matzoh-nonlinear},
with the only difference that
the components in each pair constituting $\pa\Om\cup\pa D$ may be at different distance from one another. 
\end{proofB}
\vskip.2cm
We conclude this section with a more general version of
Theorem \ref{th:matzoh}.

%%                                         %%
%%  Theorem 2.1 begins. %%
%%                                        %%
\begin{theorem}
\label{th:matzoh-annulus}
Let $\Omega$ be a  domain (not necessarily bounded) in $\mathbb R^N, \ N\ge 2,$
satisfying the exterior sphere condition
and suppose that $\partial\Omega$ is bounded.
Let $D$ be a domain with $\overline{D}\subset\Omega$, and let $\Gamma$ be a connected component of $\partial D$ satisfying
\begin{equation}
\label{nearest condition}
\dist(\Gamma, \partial\Omega) = \dist(\partial D,\partial \Omega).
\end{equation}
Suppose that $D$ satisfies the interior cone condition on $\Gamma$.
Assume that the solution
$u$ of problem {\rm (\ref{heat})-(\ref{initial})} satisfies
\eqref{level-invariant}.
\par
Then $\partial\Omega$ must be either a sphere or the union of two concentric spheres.
\end{theorem}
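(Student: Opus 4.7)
The plan is to apply the three-ingredient argument of the proof of Theorem~A sketched in Subsection~\ref{subs:1} --- Varadhan's formula, the balance law for the heat equation, and Aleksandrov's sphere theorem --- now using only the distinguished component $\Gamma$ of $\pa D$.

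First I would invoke Varadhan's result \cite{Va}: since $\Om$ satisfies the exterior sphere condition, $-4t\log u(x,t)\to d(x)^2$ uniformly on compact subsets of $\overline\Om$. Combined with \eqref{level-invariant} on $\Gamma$, this gives $d\equiv R$ on $\Gamma$, where $R=\dist(\Gamma,\pa\Om)=\dist(\pa D,\pa\Om)$; since $\pa\Om$ is bounded, $\Gamma$ is then a compact connected hypersurface. Next, the balance law \eqref{eq:balance} together with the interior cone condition on $\Gamma$ and Varadhan's short-time asymptotics (exactly as in \cite{MS3}) rules out $\na u(x_0,t)\equiv 0$ for any $x_0\in\Gamma$, making $\Gamma$ real-analytic; the exterior sphere condition on $\Om$ then ensures that the reached set
\[
\Sigma=\{y\in\pa\Om:\dist(y,\Gamma)=R\}
\]
is analytic and parallel to $\Gamma$ at distance $R$.

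Second, for any two points $P,Q\in\Sigma$ with respective nearest points $p,q\in\Gamma$, define $v(x,t)=u(x+p,t)-u(x+q,t)$. This function solves the heat equation on $(\Om-p)\cap(\Om-q)\supset B_R(0)$ and satisfies $v(0,t)=a_\Gamma(t)-a_\Gamma(t)=0$ by \eqref{level-invariant}, so the balance law \eqref{eq:secondbalance} yields $\int_{B_R(p)}u(x,t)\,dx=\int_{B_R(q)}u(x,t)\,dx$ for every $t>0$. Passing to the limit $t\to 0^+$ with normalization $t^{-(N+1)/4}$ and invoking \cite[Theorem~4.2]{MS4}, one obtains the curvature identity \eqref{prodcurv} at $P$ and $Q$; by analyticity this extends to any connected component of $\pa\Om$ that meets $\Sigma$. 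Aleksandrov's sphere theorem then forces each such component to be a sphere, whence $\Gamma$, lying at constant distance $R$ from a sphere, is itself a sphere, and the set of points at distance $R$ from $\Gamma$ consists of at most two concentric spheres.

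The main obstacle will be to handle the case in which $\pa\Om$ has components not reached by $\Gamma$, i.e., at distance strictly greater than $R$. Ruling out such components --- so that $\pa\Om$ coincides with the at most two concentric spheres found above --- is the delicate step; I would approach it by exploiting the uniqueness of the bounded solution together with the fact that $\Gamma$ being a sphere, combined with \eqref{level-invariant}, induces a rotational symmetry of $u$ about the center of $\Gamma$ that through the Dirichlet condition \eqref{dirichlet} propagates to $\pa\Om$, thus excluding any extra component and yielding the dichotomy in the conclusion.
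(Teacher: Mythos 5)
Your first two steps reproduce the paper's argument: Varadhan's formula gives parallelism, the balance law and the interior cone condition give analyticity of $\Gamma$ and of the reached component $S$ of $\partial\Omega$, and the curvature identity plus Aleksandrov's theorem make $S$ and $\Gamma$ concentric spheres. The gap is in your final paragraph, where you correctly locate the remaining difficulty but propose a mechanism that does not work as stated: $\Gamma$ being a sphere together with \eqref{level-invariant} and uniqueness does \emph{not} by itself yield rotational symmetry of $u$, because $\Omega$ is not yet known to be invariant under rotations about the center $a$, so composing $u$ with a rotation does not produce a function on $\Omega$ that could be compared to $u$ by uniqueness.

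The paper's mechanism (following Theorem 3.1 in \cite{MS5}) restricts attention to the annulus $E$ bounded by the two concentric spheres $\Gamma$ and $S$. There, each angular derivative $-(x_j-a_j)u_{x_i}+(x_i-a_i)u_{x_j}$ is caloric, vanishes at $t=0$, and vanishes on both $\Gamma$ and $S$ for all $t>0$ (since $u(\cdot,t)$ is spatially constant on each, and these vector fields are tangent to spheres centered at $a$); hence by uniqueness it vanishes on $E\times(0,+\infty)$. Spatial real-analyticity of caloric functions together with connectedness of $\Omega$ then propagate this vanishing to all of $\Omega\times(0,+\infty)$, so $u$ is radial about $a$, and the Dirichlet condition forces every component of $\partial\Omega$ onto a sphere centered at $a$; connectedness of $\Omega$ caps the count at two. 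The ingredients your sketch omits are the use of \emph{both} concentric spheres to bound the annulus $E$, and the invocation of spatial analyticity to extend the angular-derivative identity from $E$ to the whole of $\Omega$.
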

\begin{proof}
Because of the assumption (\ref{nearest condition}), the proofs of Lemma 2.2 of \cite{MS5}
and Lemma 3.1 in \cite{MS3} also work in this situation. 
Then, there exists a connected component $S$ of $\partial\Omega$ such that both $\Gamma$ and $S$ 
are  analytic and these are parallel with distance $R = \dist\left(\Gamma,\partial\Omega\right);$  also, $\prod\limits_{j=1}^{N-1}\left[\frac1R -\kappa_j(x)\right]$ is constant for $x \in S$. 
%Here, $\kappa_1(x), \cdots, \kappa_{N-1}(x)$ denote the principal curvatures of $\partial\Omega$ with respect to the %inward unit normal vector to $\partial\Omega$ at $x \in S$. 
Since $S$ is bounded, by applying  Aleksandrov's sphere theorem \cite{Alek} to this equation, we see that $S$ and $\Gamma$ are concentric spheres. 
\par
Let $E$ be the annulus with $\partial E = S\cup\Gamma$. With the help of the analyticity of $u$, 
by proceeding as in the proof of Theorem 3.1 in \cite{MS5}, we see that for any  $i \not=j$
$$
-(x_j-a_j)\frac{\partial u(x,t)}{\partial x_i} + (x_i-a_i)\frac{\partial u(x,t)}{\partial x_j} = 0\quad\mbox{ in }\ \Omega \times (0,+\infty),
$$
where the point $a = (a_1, \dots, a_N) \in \mathbb R^N$ is the center of the sphere $S$. 
Hence $u$ must be radially symmetric with respect to $a.$ 
\end{proof}

%%%%%%%%%%%%%%%%%%%%%%%%%%%%%%%%%%%%%%
%%%%%  Section 3 begins %%%%%%%%%%%%%%%%%%%%%%%
%%%%%%%%%%%%%%%%%%%%%%%%%%%%%%%%%%%%%%

\setcounter{equation}{0}
\setcounter{theorem}{0}

\section{Short-time behavior of solutions of nonlinear diffusion equations}
\label{section3}
In this section, with the help of the theory of viscosity solutions, we prove 
our keystone result, Theorem \ref{th:varadhan-nonlinear}. We begin with some preliminaries.
%%%%%%%%%%%%%%%%%%%%%%%%%%%%%%%%%%%%%%
%%%%%  Lemma 3.1 begins %%%%%%%%%%%%%%%%%%%%%%%
%%%%%%%%%%%%%%%%%%%%%%%%%%%%%%%%%%%%%%
\begin{lemma}
\label{le:comparison} 
Let $w=\phi(u),$ where $u$ is the solution of \eqref{nonlinear-eq}, \eqref{dirichlet}-\eqref{initial}.
For $j = 1, 2,$ let $w_j$ solve the problem:
\begin{eqnarray}
& (w_j)_t=\delta_j\Delta w_j\ \ &\mbox{ in }\ \Omega\times (0,+\infty),\label{delta-heat}\\
&w_j= \phi(1)\ \ &\mbox{ on }\ \pa\Omega\times (0,+\infty),\label{delta-dirichlet}\\
&w_j=0\ \ &\mbox{ on }\ \Omega\times \{0\}.\label{delta-initial}
\end{eqnarray}
\par
Then
$$
w_1 \le w \le w_2\ \mbox{ in }\ \Omega \times (0,+\infty).
$$
\end{lemma}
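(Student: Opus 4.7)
The plan is to reduce the nonlinear problem to a linear uniformly parabolic one by observing that $w=\phi(u)$ itself satisfies a linear equation in non\-divergence form. Writing $u=\psi(w)$ with $\psi=\phi^{-1}$ and substituting into \eqref{nonlinear-eq} gives $\psi'(w)\,w_t=\Delta w$, that is,
\begin{equation*}
w_t = \phi'(u)\,\Delta w \quad\mbox{ in }\ \Omega\times(0,+\infty),
\end{equation*}
where the coefficient $\phi'(u)$ takes values in $[\delta_1,\delta_2]$ by \eqref{uniformly-parabolic}. Moreover $w$ carries exactly the same parabolic boundary data as $w_1$ and $w_2$: $w=\phi(1)$ on $\partial\Omega\times(0,+\infty)$ and $w=\phi(0)=0$ on $\Omega\times\{0\}$.

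The key auxiliary fact that I would establish next is that
\begin{equation*}
\Delta w_j \ge 0\quad\mbox{ in }\ \Omega\times(0,+\infty),\quad j=1,2.
\end{equation*}
This follows from monotonicity of $w_j$ in time: for fixed $s>0$, the function $h(x,t)=w_j(x,t+s)-w_j(x,t)$ again solves \eqref{delta-heat}, vanishes on $\partial\Omega\times(0,+\infty)$, and at $t=0$ equals $w_j(x,s)\ge 0$ (by the maximum principle applied to $w_j$ with data $\phi(1)>0$ and $0$). Thus $h\ge 0$ in the cylinder, hence $(w_j)_t\ge 0$, and therefore $\delta_j\Delta w_j=(w_j)_t\ge 0$.

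With these two ingredients, the comparison is a one-line maximum principle argument. Set $v=w-w_2$. Then $v=0$ on the parabolic boundary and
\begin{equation*}
v_t - \phi'(u)\,\Delta v = \bigl(\phi'(u)-\delta_2\bigr)\,\Delta w_2 \le 0,
\end{equation*}
since $\phi'(u)-\delta_2\le 0$ and $\Delta w_2\ge 0$. Hence $v$ is a bounded subsolution of a uniformly parabolic linear equation with zero parabolic boundary data, so the maximum principle yields $v\le 0$, i.e., $w\le w_2$. The opposite inequality $w_1\le w$ is obtained symmetrically from $v=w_1-w$, using $\delta_1-\phi'(u)\le 0$ together with $\Delta w_1\ge 0$.

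The main technical point I expect to need care with is the applicability of the maximum principle on the possibly unbounded cylinder $\Omega\times(0,+\infty)$. Since $u$, $w_1$ and $w_2$ are all assumed or known to be bounded, the required statement is the standard Phragm\'en--Lindel\"of-type maximum principle for bounded solutions of uniformly parabolic linear equations on unbounded cylinders, which applies without modification; no issue arises at the corner $\partial\Omega\times\{0\}$ because the inequalities are asserted only in the open cylinder.
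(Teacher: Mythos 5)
Your proof is correct, and it follows the same overall strategy as the paper: reduce everything to a linear, uniformly parabolic comparison via $w_t=\phi'(u)\,\Delta w$ with $\phi'(u)\in[\delta_1,\delta_2]$. The paper's own proof is a single line --- it asserts $\delta_1\Delta w\le w_t\le\delta_2\Delta w$ and invokes the comparison principle --- which implicitly uses the sign condition $\Delta w\ge 0$ (equivalently $w_t\ge 0$, i.e.\ monotonicity of $u$ in $t$); without that sign the chain of inequalities in \eqref{compare} would reverse. You route the sign condition through the comparison functions instead, proving $\Delta w_j\ge 0$ via time-monotonicity of $w_j$ and then showing $w-w_2$ (resp.\ $w_1-w$) is a subsolution of the uniformly parabolic operator $\partial_t-\phi'(u)\Delta$ with zero parabolic boundary data. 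Both placements of the sign condition work equally well; yours has the small advantage that the time-monotonicity is established for a constant-coefficient heat equation rather than for the nonlinear problem, but the ideas are the same. You also correctly flag the need for a Phragm\'en--Lindel\"of version of the maximum principle when $\Omega$ is unbounded, a point the paper leaves to the reader. In short: same approach, with the implicit step made explicit.
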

\begin{proof}
Since $w_t=\phi^\prime(u)\Delta w,$ by \eqref{uniformly-parabolic} we have:
\begin{equation}
\label{compare}
\delta_1\,\Delta w \le w_t \le \delta_2\,\Delta w\ \mbox{ in }\ \Omega \times (0,+\infty).
\end{equation}
Hence, by the comparison principle we get our claim. 
\end{proof}
\vskip.2cm

Now, let $\Psi = \Psi(s)$ be the inverse function of $\Phi$. Then 
$$
s = \Phi(\Psi(s)) = \int_1^{\Psi(s)} \frac {\phi^\prime(\xi)}\xi\ d\xi
$$
and
\begin{equation}
\label{inverse-diff}
\Psi(s) = \phi^\prime(\Psi(s)) \Psi^\prime(s),
\end{equation}
by differentiating in $s.$
\par
As in Freidlin and Wentzell \cite{FW}, for $0 < \varepsilon < 1$, define the function $u^\varepsilon= u^\varepsilon(x,t)$ by
$$
u^\varepsilon(x,t) = u(x,\varepsilon t)\ \mbox{ for }\ (x,t) \in \Omega \times (0,+\infty).
$$
Then $u^\varepsilon$ satisfies
\begin{eqnarray*}
&u^\varepsilon_t =\varepsilon\Delta \phi(u^\varepsilon)\quad\mbox{ in }\ &\Omega\times (0,+\infty),\label{nonlinear-eq-var}\\
&u^\varepsilon=1 \quad\quad\ \ \ \ \ \mbox{ on } &\partial\Omega\times
(0,+\infty),\label{nonlinear-Dirichlet-var}\\
&u^\varepsilon=0 \quad\quad\ \ \ \ \ \mbox{ on } &\Omega\times
\{0\}.\label{nonlinear-initial-var}
\end{eqnarray*}
Moreover, the function $v^\varepsilon = v^\varepsilon(x,t)$ defined by
$$
v^\varepsilon(x,t) = -\varepsilon\Phi(u^\varepsilon(x,t))\ \mbox{ for }\ (x,t) \in \Omega \times (0,+\infty).
$$
is such that $u^\varepsilon = \Psi\left(-\varepsilon^{-1} v^\varepsilon\right)$ 
and, by \eqref{inverse-diff}, we have that
\begin{eqnarray}
&v^\varepsilon_t = \varepsilon\,\phi^\prime\,\Delta v^\varepsilon - |\nabla v^\varepsilon|^2\quad\ \mbox{ in }\  &\Omega\times (0,+\infty),\label{pressure-equation}\\
&v^\varepsilon=0 \quad\quad\qquad\qquad\qquad\ \mbox{ on } &\partial\Omega\times (0,+\infty),\label{Dirichlet-pressure}\\
&v^\varepsilon=+\infty \qquad\qquad\quad\quad\ \ \ \mbox{ on } &\Omega\times
\{0\},\label{initial-pressure}
\end{eqnarray}
where $\phi^\prime= \phi^\prime\left(\Psi\left(-\varepsilon^{-1}v^\varepsilon\right)\right)$.

%%%%%%%%%%%%%%%%%%%%%%%%%%%%%%%%%%%%%%
%%%%%  Lemma 3.2 begins %%%%%%%%%%%%%%%%%%%%%%%
%%%%%%%%%%%%%%%%%%%%%%%%%%%%%%%%%%%%%%
\begin{lemma}
\label{le:uniform-estimate} 
It holds that for $(x,t) \in \overline{\Omega} \times (0,+\infty)$
$$
\frac{\delta_1}{\delta_2}\cdot\frac 1{4t} d(x)^2 \le \liminf_{\varepsilon \to 0^+} v^\varepsilon(x,t) \le  \limsup_{\varepsilon \to 0^+} v^\varepsilon(x,t)\le \frac{\delta_2}{\delta_1}\cdot\frac 1{4t} d(x)^2,
$$
where these limits as $\varepsilon \to 0^+$ are uniform in every compact set contained in $\overline{\Omega} \times (0, +\infty)$.
\end{lemma}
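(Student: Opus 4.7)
The plan is to sandwich $v^\varepsilon=-\varepsilon\Phi(u^\varepsilon)$ between quantities built from $w_1$ and $w_2$, to which the classical Varadhan asymptotic applies directly, and to pass to the limit $\varepsilon\to 0^+$. Two elementary ingredients drive the argument. First, since $\phi(0)=0$ and $\delta_1\le\phi'\le\delta_2$, we have $\delta_1 s\le\phi(s)\le\delta_2 s$ for $s\in[0,1]$, so Lemma \ref{le:comparison} implies
\[
\frac{w_1(x,\tau)}{\delta_2}\ \le\ u(x,\tau)\ \le\ \frac{w_2(x,\tau)}{\delta_1}\quad\text{in }\Omega\times(0,+\infty).
\]
Second, integrating $\phi'(\xi)/\xi$ over $[s,1]$ and using the same pointwise bounds on $\phi'$ yields, for $s\in(0,1]$,
\[
\delta_1\,(-\log s)\ \le\ -\Phi(s)\ \le\ \delta_2\,(-\log s).
\]
Composing these (taking logarithms in the first display and substituting $s=u(x,\varepsilon t)$ into the second, then multiplying by $\varepsilon>0$) produces the pointwise sandwich
\[
-\delta_1\varepsilon\log w_2(x,\varepsilon t)+\delta_1\varepsilon\log\delta_1\ \le\ v^\varepsilon(x,t)\ \le\ -\delta_2\varepsilon\log w_1(x,\varepsilon t)+\delta_2\varepsilon\log\delta_2.
\]

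Next I apply classical Varadhan to the linear solutions $w_j$. Each $w_j$ solves a constant-coefficient heat equation with diffusivity $\delta_j$ and boundary datum $\phi(1)>0$; rescaling time by $\delta_j$ and normalizing by $\phi(1)$ reduces it to the standard Varadhan setting, so that
\[
-4t\log w_j(x,t)\ \longrightarrow\ \frac{d(x)^2}{\delta_j}\quad(t\to 0^+),
\]
uniformly on compact subsets of $\overline\Omega$ (the additive contribution $-4t\log\phi(1)$ vanishes, and at $\partial\Omega$ both sides are identically $0$). Substituting $t\mapsto\varepsilon t$ and dividing by $4t$, this becomes
\[
-\varepsilon\log w_j(x,\varepsilon t)\ \longrightarrow\ \frac{d(x)^2}{4t\,\delta_j}\quad(\varepsilon\to 0^+),
\]
uniformly on compacta of $\overline\Omega\times(0,+\infty)$, because $t$ is bounded away from $0$ there. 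The terms $\varepsilon\log\delta_j$ are $O(\varepsilon)$ and vanish in the limit. Inserting these asymptotics into the sandwich of the previous paragraph delivers exactly
\[
\frac{\delta_1}{\delta_2}\cdot\frac{d(x)^2}{4t}\ \le\ \liminf_{\varepsilon\to 0^+}v^\varepsilon(x,t)\ \le\ \limsup_{\varepsilon\to 0^+}v^\varepsilon(x,t)\ \le\ \frac{\delta_2}{\delta_1}\cdot\frac{d(x)^2}{4t},
\]
with the convergence uniform on compacta of $\overline\Omega\times(0,+\infty)$.

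The only step requiring genuine care is the composition of Varadhan's uniform short-time limit with the rescaling $s=\varepsilon t$: one has to check that uniformity in $x\in\overline\Omega$ together with uniformity as the time argument tends to $0$ survives division by the bounded factor $4t$, which is true precisely because $t$ is confined to a compact subset of $(0,+\infty)$. The behaviour at boundary points of $\overline\Omega$ is automatic: there $u=1$, $\Phi(u)=0$, $v^\varepsilon\equiv 0$, and $d(x)^2=0$, so all three terms in the asserted inequality collapse to $0$. No further PDE input beyond Lemma \ref{le:comparison} and the classical linear Varadhan theorem is needed here; the residual gap between $\delta_1/\delta_2$ and $\delta_2/\delta_1$ is exactly what Theorem \ref{th:varadhan-nonlinear} will later close via a viscosity-solution analysis of the equation \eqref{pressure-equation}.
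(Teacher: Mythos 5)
Your proof is correct and follows essentially the same route as the paper: both sandwich $u$ between $w_1/\delta_2$ and $w_2/\delta_1$ via Lemma \ref{le:comparison} and the bound $\delta_1 s\le\phi(s)\le\delta_2 s$, use $\delta_1(-\log s)\le-\Phi(s)\le\delta_2(-\log s)$ to pass to $v^\varepsilon$, and invoke Varadhan's theorem for the time-rescaled linear solutions $w_j$. Your version is merely a bit more explicit about the $\varepsilon\log\delta_j$ remainders and the uniformity after dividing by $4t$.
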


\begin{proof}
We observe that the following hold:
\begin{eqnarray}
& \delta_1 s \le \phi(s) \le \delta_2 s\ &\mbox{ for }\ s \ge 0,\label{ine:first}
\\
&-\delta_1 \log s \le -\Phi(s) \le -\delta_2\log s\ &\mbox{ for }\ 0 < s \le 1,\label{ine:second}
\\
&e^{s/\delta_1}
%{\frac s{\delta_1}}
\le \Psi(s) \le e^{s/\delta_2}
%{\frac s{\delta_2}}
\ &\mbox{ for }\ -\infty < s \le 0.\label{ine:third}
\end{eqnarray}
Let $w_j^\varepsilon=w_j^\varepsilon(x,t)\,  (j=1, 2)$ be the functions defined by
$$
w_j^\varepsilon(x,t) = w_j(x,\varepsilon t),
$$
where the $w_j$'s are defined in Lemma \ref{le:comparison}.
With the aid of (\ref{ine:first}) and (\ref{ine:second}), it follows from Lemma \ref{le:comparison} that
\begin{equation*}
\label{estimates-above-below}
-\varepsilon\delta_1\log\left(\frac{w_2^\varepsilon}{\delta_1}\right) \le v^\varepsilon \le -\varepsilon\delta_2\log\left(\frac{w_1^\varepsilon}{\delta_2}\right)\ \mbox{ in }\ \Omega \times (0,+\infty).
\end{equation*}
\par
By the result of Varadhan \cite{Va}, we see that, as $\varepsilon \to 0^+,$ the functions $-\varepsilon\delta_j\log w_j^\varepsilon$  converge to the function $\frac1{4t} d(x)^2$ uniformly
on every compact set contained in $\overline{\Omega} \times (0,+\infty)$, since each scaled function $\frac 1{\phi(1)}w_j(x, \delta_j^{-1}t)$ solves problem (\ref{heat})-(\ref{initial}). 
Our claim then follows at once.
\end{proof}
\vskip.2cm
The next lemma easily follows from Lemma \ref{le:uniform-estimate}.

%%%%%%%%%%%%%%%%%%%%%%%%%%%%%%%%%%%%%%
%%%%%  Lemma 3.3 begins %%%%%%%%%%%%%%%%%%%%%%%
%%%%%%%%%%%%%%%%%%%%%%%%%%%%%%%%%%%%%% 
\begin{lemma}
\label{le:uniform-bound} 
For any compact set $K$ in $\Omega \times(0,+\infty)$, there exist three positive 
constants $\varepsilon_0, c_1,$ and $c_2$ $(0 < c_1\le c_2)$
depending on $K$  such that
$$
0 < c_1 \le v^\varepsilon \le c_2\ \mbox{ in }\ K,
$$
for $0 < \eps \le \eps_0.$ 
\end{lemma}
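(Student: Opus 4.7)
The plan is to read this off Lemma \ref{le:uniform-estimate} by a standard compactness argument. First I observe that if $K\subset \Omega\times(0,+\infty)$ is compact, then the continuous distance function $d(x)$ is bounded above and away from zero on the projection of $K$ to $\Omega$: there exist $0<d_0\le D<\infty$ with $d_0\le d(x)\le D$ for every $(x,t)\in K$. Likewise, the time coordinate on $K$ is confined to some interval $[t_1,t_2]$ with $0<t_1\le t_2<\infty$. Consequently the two pinching quantities of Lemma \ref{le:uniform-estimate},
\[
\frac{\delta_1}{\delta_2}\cdot\frac{d(x)^2}{4t}\quad\text{and}\quad\frac{\delta_2}{\delta_1}\cdot\frac{d(x)^2}{4t},
\]
are bounded on $K$ between explicit positive constants.

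Next I set
\[
2c_1 := \frac{\delta_1}{\delta_2}\cdot\frac{d_0^2}{4t_2}\quad\text{and}\quad \frac{c_2}{2} := \frac{\delta_2}{\delta_1}\cdot\frac{D^2}{4t_1},
\]
noting $0<2c_1$ and $c_2/2<\infty$, and appeal to the uniformity (on compact subsets of $\overline{\Omega}\times(0,+\infty)$) of the liminf and limsup asserted in Lemma \ref{le:uniform-estimate}. Thus there exists $\varepsilon_0>0$ such that, for every $0<\varepsilon\le\varepsilon_0$ and every $(x,t)\in K$,
\[
v^\varepsilon(x,t)\ge \frac{\delta_1}{\delta_2}\cdot\frac{d(x)^2}{4t}-c_1\ge 2c_1-c_1=c_1,
\]
and symmetrically $v^\varepsilon(x,t)\le \frac{\delta_2}{\delta_1}\cdot\frac{d(x)^2}{4t}+\frac{c_2}{2}\le c_2$, which is the desired estimate.

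There is no genuine obstacle here: the whole content of the lemma is the transfer of the uniform control furnished by Lemma \ref{le:uniform-estimate} into two-sided bounds by compactness. The only point worth highlighting is that the hypothesis that $K$ is compactly contained in $\Omega\times(0,+\infty)$ (as opposed to being merely closed in $\overline{\Omega}\times[0,+\infty)$) is precisely what rules out degeneracy of $d(x)$ at $\partial\Omega$ and of $1/t$ at $t=0$, and therefore what makes $c_1$ strictly positive and $c_2$ finite.
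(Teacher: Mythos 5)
Your proof is correct and is precisely the ``easily follows'' argument the paper has in mind: extract from Lemma~\ref{le:uniform-estimate} the uniform two-sided pinching $v^\varepsilon \gtrless \frac{\delta_1}{\delta_2}\cdot\frac{d(x)^2}{4t} \mp \eta$ for small $\varepsilon$, and then use the compactness of $K$ in $\Omega\times(0,+\infty)$ to bound $d(x)$ away from $0$ and $\infty$ and $t$ away from $0$ and $\infty$, which keeps both pinching functions within fixed positive bounds. The one small point worth articulating (and you essentially do) is that the phrase ``uniform limits'' in Lemma~\ref{le:uniform-estimate} should be read via its proof, where the explicit upper and lower bounding functions for $v^\varepsilon$ converge uniformly on compacta; this is what licenses the choice of a single $\varepsilon_0$ working across all of $K$.
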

The key point in the proof of Theorem \ref{th:varadhan-nonlinear} is to obtain 
the following gradient estimate which we shall prove at the end of this section.
%%%%%%%%%%%%%%%%%%%%%%%%%%%%%%%%%%%%%%
%%%%%  Lemma 3.4 begins %%%%%%%%%%%%%%%%%%%%%%%
%%%%%%%%%%%%%%%%%%%%%%%%%%%%%%%%%%%%%%
\begin{lemma}
\label{le:gradient-bound} 
For any compact set $K$ in $\Omega \times(0,+\infty)$, there exist two positive constants $\eps_1$ 
$(\eps_1\le\eps_0)$ and $c_3$ depending on $K,$ such that 
$$
|\nabla v^\varepsilon| \le c_3\ \mbox{ in }\ K,
$$
for $0 < \eps \le \eps_1.$
\end{lemma}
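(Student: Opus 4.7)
I would prove Lemma \ref{le:gradient-bound} by a Bernstein-type gradient estimate for the equation \eqref{pressure-equation}, using Lemma \ref{le:uniform-bound} as the starting point. Fix a slightly larger compact set $K'\supset K$ contained in $\Omega\times(0,+\infty)$. By Lemma \ref{le:uniform-bound}, $c_1\le v^\varepsilon\le c_2$ on $K'$ for $\varepsilon\le\varepsilon_0$, and since \eqref{nonlinear-eq} is uniformly parabolic by \eqref{uniformly-parabolic}, standard interior parabolic regularity makes $v^\varepsilon$ classically $C^{2,1}$ on $K'$; moreover $u^\varepsilon=\Psi(-v^\varepsilon/\varepsilon)$ lies in a compact subinterval of $(0,1)$ on $K'$, so $\phi'(u^\varepsilon)$ is bounded away from zero and $\phi''(u^\varepsilon)$ is bounded there.

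Differentiating \eqref{pressure-equation} in $x_k$, multiplying by $2v^\varepsilon_{x_k}$, and summing, with $P=|\nabla v^\varepsilon|^2$ and $A=\phi'(u^\varepsilon)$, one obtains the pointwise identity
$$P_t-\varepsilon A\,\Delta P+2\nabla v^\varepsilon\cdot\nabla P+2\varepsilon A\,|D^2 v^\varepsilon|^2=2\varepsilon A'(v^\varepsilon)\,P\,\Delta v^\varepsilon.$$
The crucial algebraic observation, coming from \eqref{inverse-diff}, is that
$$\varepsilon A'(v^\varepsilon)=-\phi''(u^\varepsilon)\,\Psi'(-v^\varepsilon/\varepsilon)=-\phi''(u^\varepsilon)\,\frac{u^\varepsilon}{\phi'(u^\varepsilon)}$$
is uniformly bounded on $K'$ by a constant $C_0$ depending only on $\phi$ and $c_1,c_2$, independently of $\varepsilon$.

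Next, pick a smooth nonnegative cutoff $\zeta\in C^\infty_c(\Omega\times(0,+\infty))$ supported in $K'$ with $\zeta\equiv 1$ on $K$, and consider the auxiliary function
$$W=\zeta^2 P+\mu v^\varepsilon,$$
for $\mu>0$ to be chosen large below. On the parabolic boundary of $K'$, where $\zeta\equiv 0$, we have $W=\mu v^\varepsilon\le\mu c_2$ directly. If instead the maximum of $W$ over $\overline{K'}$ is attained at an interior space-time point, the conditions $W_t\ge 0$, $\nabla W=0$, $\Delta W\le 0$ combined with the $P$-equation above and the pointwise identity $\varepsilon A\,\Delta v^\varepsilon=v^\varepsilon_t+P$ (read off directly from \eqref{pressure-equation}) are used to derive an upper bound $W\le C(K)$ independent of $\varepsilon$. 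Since $\zeta\equiv 1$ on $K$ and $v^\varepsilon\ge c_1$, this translates into $P\le C(K)$ on $K$, proving the claim with $c_3=\sqrt{C(K)}$.

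The main obstacle is the right-hand-side term $2\varepsilon A'(v^\varepsilon)P\Delta v^\varepsilon$: a naive Young inequality that absorbs it into the coercive piece $-2\varepsilon A\zeta^2|D^2 v^\varepsilon|^2$ leaves a residue of order $\zeta^2 P^2/(\varepsilon A)$, which blows up as $\varepsilon\to 0^+$. The resolution is to substitute $\Delta v^\varepsilon=(v^\varepsilon_t+P)/(\varepsilon A)$ into the bad term, rewriting it as $2(A'/A)\zeta^2 P(v^\varepsilon_t+P)$, and then to exploit the penalty contribution from $\mu v^\varepsilon$ in $W$: at the interior maximum, $\Delta W\le 0$ produces (after multiplication by $\varepsilon A$ in the $P$-equation) a coercive piece $-\mu(v^\varepsilon_t+P)$, hence a term $-\mu P$ that, provided $\mu$ is taken sufficiently large in terms of $C_0$, $\|\zeta\|_{C^2}$, and the $L^\infty$-bounds on $v^\varepsilon$, absorbs the dangerous quadratic-in-$P$ contributions. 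It is precisely the interplay between this $-\mu P$ coercivity and the uniform boundedness of $\varepsilon A'$ (rather than merely $A'$, which would blow up like $1/\varepsilon$) that allows the $\varepsilon$-uniform gradient estimate.
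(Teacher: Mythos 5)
Your setup (Bernstein via a cutoff and a penalty in $v^\varepsilon$, plus the identity $P_t-\varepsilon A\Delta P+2\nabla v^\varepsilon\!\cdot\!\nabla P+2\varepsilon A|D^2v^\varepsilon|^2=2\varepsilon A'P\Delta v^\varepsilon$) is the right framework and matches the paper's strategy. However, there are genuine gaps, and the one singled out below is exactly the step the paper's inequality \eqref{most-important} is designed to close.

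First, your statement that on $K'$ the quantity $u^\varepsilon=\Psi(-\varepsilon^{-1}v^\varepsilon)$ ``lies in a compact subinterval of $(0,1)$'' is false: on any compact $K'\subset\Omega\times(0,+\infty)$ one has $v^\varepsilon\ge c_1>0$, and by \eqref{ine:third} this forces $u^\varepsilon\le e^{-c_1/(\varepsilon\delta_2)}\to 0$ exponentially as $\varepsilon\to 0^+$. (The conclusion that $\varepsilon A'=-\phi''(u^\varepsilon)u^\varepsilon/\phi'(u^\varepsilon)$ is bounded is still true because $\phi'\ge\delta_1$ and $s\mapsto s\phi''(s)$ is continuous on $[0,1]$, but the reason you give is wrong.) What the paper actually exploits is much stronger than boundedness: \eqref{most-important} shows that $\Psi'(-\varepsilon^{-1}v^\varepsilon)$, and hence the coefficient $\varepsilon A'$, is \emph{exponentially small} in $\varepsilon$ on $K'$. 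That exponential smallness is what allows one to absorb a \emph{quartic} residue $\Psi'\zeta^2|\nabla v^\varepsilon|^4$ into the quadratic $\lambda|\nabla v^\varepsilon|^2$ (and a $\Psi'\zeta^2|\nabla^2 v^\varepsilon|^2$ term into $\varepsilon\delta_1\zeta^2|\nabla^2 v^\varepsilon|^2$). Mere boundedness of $\varepsilon A'$ is not enough, and indeed you acknowledge that a naive Young's inequality leaves an uncontrolled $\zeta^2 P^2/(\varepsilon A)$.

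Second, your proposed fix does not actually avoid this. Substituting $\Delta v^\varepsilon=(v^\varepsilon_t+P)/(\varepsilon A)$ rewrites the bad term as $2(A'/A)\zeta^2P(v^\varepsilon_t+P)$, but $A'\sim 1/\varepsilon$ (it is $\varepsilon A'$, not $A'$, that is bounded) so the $1/\varepsilon$ singularity reappears; moreover $v^\varepsilon_t+P=\varepsilon A\Delta v^\varepsilon$, so this is circular, and there is no a priori bound on $v^\varepsilon_t$ available at this stage (the H\"older-in-time estimate, Lemma~\ref{le:holder-bound-in-time}, comes \emph{after} the gradient bound). Finally, the sign of the penalty matters: with $W=\zeta^2P+\mu v^\varepsilon$, the first-order condition $\nabla W\!\cdot\!\nabla v^\varepsilon=0$ at the maximum produces a contribution $+2\mu P$ that more than cancels the $-\mu P$ coming from the parabolic operator applied to $\mu v^\varepsilon$, leaving a net $+\mu P$ and a vacuous inequality. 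The paper's choice is $z=\zeta^2|\nabla v^\varepsilon|^2-\lambda v^\varepsilon$, which produces net $-\lambda|\nabla v^\varepsilon|^2$ and a usable bound. In short: the structure is right, but you need both the correct sign in \eqref{auxiliary-function} and, crucially, the exponential decay in \eqref{most-important} (not just boundedness of $\varepsilon A'$) for the absorption to close.
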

Then, by combining Lemmas \ref{le:uniform-bound} and \ref{le:gradient-bound} with Gilding's result \cite{Gild}, 
we obtain the following uniform H\" older estimate.
%%%%%%%%%%%%%%%%%%%%%%%%%%%%%%%%%%%%%%
%%%%%  Lemma 3.5 begins %%%%%%%%%%%%%%%%%%%%%%%
%%%%%%%%%%%%%%%%%%%%%%%%%%%%%%%%%%%%%%
\begin{lemma}
\label{le:holder-bound-in-time} 
For any compact set $K$ in $\Omega \times(0,+\infty)$, there exist two positive constants $\eps_2$ $(\eps_2\le\eps_1)$
and $c_4$ depending on $K,$ such that 
$$
|v^\varepsilon(x,t)-v^\varepsilon(x,s)| \le c_4 |t-s|^{\frac 12}\ \mbox{ for any pair}\ (x,t), (x,s) \in K
$$
and for $0 < \eps \le \eps_2.$
\end{lemma}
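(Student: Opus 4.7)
The plan is to extract the uniform $\tfrac12$-Hölder estimate in time from the spatial $L^\infty$ and Lipschitz bounds provided by Lemmas \ref{le:uniform-bound} and \ref{le:gradient-bound}, via the standard test-function argument underlying Gilding's result \cite{Gild}. The guiding principle is that a bounded solution of a parabolic equation whose coefficients are bounded above and whose spatial gradient is uniformly bounded is automatically $\tfrac12$-Hölder in time, irrespectively of how weak the diffusion is.

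First I would fatten $K$ to a compact set $\tilde K \subset \Om \times (0,+\infty)$ with $K \subset \mbox{int}\,\tilde K$ and $\dist(K, \pa \tilde K) > 0$, and apply the two previous lemmas on $\tilde K$ to obtain $\eps_2 \in (0, \eps_1]$ and constants with $0 < \tilde c_1 \le v^\eps \le \tilde c_2$ and $|\na v^\eps| \le \tilde c_3$ on $\tilde K$ for $\eps \le \eps_2$. Rewriting \eqref{pressure-equation} as $v^\eps_t = a^\eps \Delta v^\eps - |\na v^\eps|^2$ with $a^\eps := \eps\,\phi^\prime(\Psi(-\eps^{-1} v^\eps))$, I would record the following $\eps$-uniform bounds on $\tilde K$: $0 \le a^\eps \le \delta_2$ by \eqref{uniformly-parabolic}, $|\na v^\eps|^2 \le \tilde c_3^2$, and, crucially, $|\na a^\eps| \le C |\na v^\eps|$. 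The last bound rests on the identity \eqref{inverse-diff}, $\Psi^\prime = \Psi/\phi^\prime(\Psi)$, which causes the factor $\eps^{-1}$ from differentiating the argument of $\Psi$ to cancel exactly with the $\eps$ sitting in front of $\phi^\prime$ in $a^\eps$.

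Next, for fixed $(x_0, t_1), (x_0, t_2) \in K$ with $t_1 < t_2$ and a scale $r>0$, I would pick a cutoff $\chi \in C^\infty_c(B_{2r}(x_0))$ with $\chi \equiv 1$ on $B_r(x_0)$ and $|\na\chi| \le C/r$, and test the PDE against $\chi$ on the slab $[t_1, t_2]$. Integrating by parts in the $a^\eps\Delta v^\eps$ term and using the three uniform bounds above yields
$$
\left|\int \chi\bigl(v^\eps(\cdot, t_2) - v^\eps(\cdot, t_1)\bigr)\, dy\right| \le C (t_2 - t_1) \bigl(r^{N-1} + r^N\bigr).
$$
Dividing by $\int \chi \, dy \ge c\, r^N$ and comparing the weighted average to the pointwise value at $x_0$ via $|\na v^\eps| \le \tilde c_3$ produces
$$
|v^\eps(x_0, t_2) - v^\eps(x_0, t_1)| \le C\, r + C\, \frac{t_2-t_1}{r} + C\,(t_2 - t_1).
$$
The choice $r = (t_2-t_1)^{1/2}$, valid once $|t_2-t_1|$ is small compared to $\dist(K, \pa\tilde K)^2$, closes the estimate; larger differences are handled trivially by $v^\eps \le \tilde c_2$.

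The main obstacle is uniformity in $\eps$. Since $a^\eps \le \eps\delta_2 \to 0$, one might worry that the diffusion becomes too degenerate to produce any time regularity; fortunately only an upper bound on $a^\eps$ enters, so degeneracy is actually favorable. The genuinely delicate point is the control of $\na a^\eps$, for which the $\eps^{-1}/\eps$ cancellation noted above via \eqref{inverse-diff} is essential — without it, the integration by parts would blow up as $\eps \to 0^+$.
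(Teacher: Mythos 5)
Your proof is correct and follows essentially the same route as the paper, which simply cites Gilding's result \cite{Gild} for the implication ``$L^\infty$ bound $+$ spatial Lipschitz bound $\Rightarrow$ time-$\tfrac12$-H\"older'' for bounded-coefficient parabolic equations; what you have done is unpack that citation into the underlying test-function argument. The one genuine value-add in your write-up is the explicit verification that the H\"older constant is uniform in $\varepsilon$: writing $a^\varepsilon = \varepsilon\,\phi^\prime(\Psi(-\varepsilon^{-1}v^\varepsilon))$, you correctly note that $\nabla a^\varepsilon = -\phi^{\prime\prime}\Psi^\prime\,\nabla v^\varepsilon$ carries no residual $\varepsilon^{-1}$ (the factor from the chain rule cancels the prefactor $\varepsilon$), and in fact, by the key inequality \eqref{most-important}, $|\nabla a^\varepsilon|$ is not merely bounded but exponentially small as $\varepsilon\to 0^+$ on compact subsets where $v^\varepsilon\ge c_1>0$. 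This is exactly the point that makes the constants in Gilding's argument uniform in $\varepsilon$, and the paper's one-line proof leaves it implicit; your derivation makes it transparent. (One minor slip: you first write $0\le a^\varepsilon\le\delta_2$ when the correct bound is $a^\varepsilon\le\varepsilon\delta_2$, but since $\varepsilon\le 1$ this only improves the estimate and you correct it yourself in the last paragraph.)
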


%%%%%%%%%%%%%%%%%%%%%%%%%%%%%%%%%%%%%%
%%%%%  Theorem 3.6 begins %%%%%%%%%%%%%%%%%%%%%%%
%%%%%%%%%%%%%%%%%%%%%%%%%%%%%%%%%%%%%%
 \begin{theorem}
\label{th:convergence} 
The following limit
$$
\lim_{\veps\to 0^+} v^\varepsilon(x,t)=\frac 1{4t}\, d(x)^2
$$
holds uniformly on every compact set in $\Omega\times(0,+\infty)$.
\end{theorem}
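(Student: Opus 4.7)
The plan is to apply the vanishing-viscosity method from the theory of viscosity solutions. First, combining the spatial Lipschitz bound of Lemma \ref{le:gradient-bound}, the $\tfrac12$-H\"older-in-time bound of Lemma \ref{le:holder-bound-in-time}, and the uniform boundedness of Lemma \ref{le:uniform-bound}, the family $\{v^\eps\}_{0<\eps\le\eps_2}$ is locally equicontinuous and locally uniformly bounded on $\Om\times(0,+\infty)$. Arzel\`a--Ascoli then yields, along every sequence $\eps_n\to 0^+$, a subsequence (still labelled $\eps_n$) such that $v^{\eps_n}\to v^0$ locally uniformly, for some continuous $v^0$. The goal then becomes to show that every such subsequential limit coincides with $V(x,t):=d(x)^2/(4t)$; once this is done, the full family converges locally uniformly.

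Second, I would pass to the limit in \eqref{pressure-equation}, which I rewrite as
$$v^\eps_t-a^\eps\,\Delta v^\eps+|\nabla v^\eps|^2=0,\qquad a^\eps:=\eps\,\phi'\bigl(\Psi(-\eps^{-1}v^\eps)\bigr)\in[\eps\delta_1,\eps\delta_2].$$
Since $a^\eps\to 0$ uniformly and $v^{\eps_n}\to v^0$ locally uniformly, the classical stability theorem for viscosity solutions under vanishing viscosity (see \cite{CrIL}, \cite{LSV}) gives that $v^0$ is a viscosity solution of the Hamilton--Jacobi equation
$$v_t+|\nabla v|^2=0\quad\text{in }\ \Om\times(0,+\infty).$$
A short direct check shows that $V$ is also a viscosity solution: where $d$ is $C^1$, the identities $V_t=-d^2/(4t^2)$ and $|\nabla V|^2=d^2|\nabla d|^2/(4t^2)=d^2/(4t^2)$ give equality, while at singular points of $d$ the semiconcavity of $d^2$ (equivalently, the Hopf--Lax representation $V(x,t)=\inf_{y\in\pa\Om}|x-y|^2/(4t)$) handles the nondifferentiable case.

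It then remains to identify $v^0\equiv V$, which I would do by matching parabolic boundary data through Lemma \ref{le:uniform-estimate}. Since that sandwich is uniform on compact subsets of $\overline\Om\times(0,+\infty)$, it forces $v^0=V=0$ continuously on $\pa\Om\times(0,+\infty)$ and yields the global two-sided envelope
$$\frac{\delta_1}{\delta_2}\,V\ \le\ v^0\ \le\ \frac{\delta_2}{\delta_1}\,V\quad\text{on }\ \Om\times(0,+\infty),$$
so that $v^0$ and $V$ share the same parabolic boundary behaviour and the same blow-up rate as $t\to 0^+$. A viscosity comparison principle applied on slabs $\Om\times[\tau,T]$, followed by the limit $\tau\to 0^+$, should then force $v^0\equiv V$; since the subsequential limit is uniquely determined, the entire family $v^\eps$ converges locally uniformly to $V$.

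The main obstacle will be the unbounded initial condition: the standard comparison theorems for Hamilton--Jacobi equations assume bounded initial data, whereas here $V(\cdot,t)\to+\infty$ as $t\to 0^+$ at every interior point, and the envelope above only gives a bounded-ratio control. I would handle this either through the Hopf--Lax representation of $V$, which characterises it as the unique viscosity solution of the Dirichlet problem with initial value $+\infty$ in $\Om$ and $0$ on $\pa\Om$, or by an approximation argument in which $v^0$ is compared with the one-parameter families of super- and subsolutions $(1\pm\eta)V$ (a direct computation using $V_t+|\nabla V|^2=0$ shows $(1+\eta)V$ is a supersolution and $(1-\eta)V$ a subsolution for $0<\eta<1$) on slabs $\{\tau\le t\le T\}$, tightening the ratio $v^0/V$ toward $1$ via a bootstrap exploiting the contractive character of the Hamilton--Jacobi semigroup together with the envelope from Lemma \ref{le:uniform-estimate}.
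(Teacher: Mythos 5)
Your outline matches the paper's through the compactness step (Lemmas \ref{le:uniform-bound}, \ref{le:gradient-bound}, \ref{le:holder-bound-in-time} plus Ascoli--Arzel\`a and a diagonal argument, then stability of viscosity solutions under vanishing viscosity to obtain a subsequential limit $v^0$ solving $v_t+|\nabla v|^2=0$ in $\Omega\times(0,\infty)$). Where you diverge from the paper is precisely at the step you yourself flag as the main obstacle: identifying $v^0$ with $V=d^2/(4t)$. The paper's resolution is a clean device that you do not use: it extends $v$ by zero to all of $\mathbb R^N\times(0,\infty)$, uses the two-sided bound of Lemma \ref{le:uniform-estimate} to see that the extended function $V$ is continuous, is differentiable at points of $\partial\Omega$, and has vanishing value and gradient there, so that $V$ is a viscosity solution of the \emph{Cauchy} problem on $\mathbb R^N$ with lower semicontinuous initial data equal to $+\infty$ on $\Omega$ and $0$ on $\mathbb R^N\setminus\Omega$. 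It then invokes Str\"omberg's theorem \cite{Str}, which states that the Hopf--Lax formula gives the unique viscosity solution of exactly this kind of Cauchy problem, yielding $V(x,t)=\operatorname{dist}(x,\mathbb R^N\setminus\Omega)^2/(4t)$ directly and handling in one stroke the unbounded initial data, the possibly unbounded $\Omega$, and the quadratic growth of $V$ at infinity.

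Your proposed alternatives are not fully carried out, and there are two concrete gaps. First, you describe the Hopf--Lax formula as characterising the unique solution of a \emph{Dirichlet} problem on $\Omega$ with initial data $+\infty$; that is not the statement available. Hopf--Lax gives the unique solution of a Cauchy problem on $\mathbb R^N$ with lower semicontinuous initial data, and to use it you would still need the extension-by-zero step (and the vanishing of $V$ and $\nabla V$ on $\partial\Omega$, provided by Lemma \ref{le:uniform-estimate}) so that the extended function actually solves the Cauchy problem; as written you never perform that extension. Second, your bootstrap via the sub/supersolutions $(1\pm\eta)V$ is stated imprecisely: comparing $v^0$ with $(1\pm\eta)V$ on a slab $\{t\ge\tau\}$ only preserves the factor $\delta_2/\delta_1$ you already have, it does not improve it. The version of the bootstrap that actually tightens the ratio uses time-shifted copies of $V$ itself, e.g. $W_\tau(x,t)=d(x)^2/\bigl(4(t-\tau(1-\delta_1/\delta_2))\bigr)$, which dominates $v^0$ at $t=\tau$ by Lemma \ref{le:uniform-estimate} and is a supersolution; letting $\tau\to0^+$ after comparison yields $v^0\le V$, and symmetrically $v^0\ge V$. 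Even so, this requires a comparison principle on $\Omega\times[\tau,T]$ for the eikonal equation with Dirichlet data; when $\Omega$ is unbounded (as the paper explicitly allows) one must additionally control the quadratic growth of $v^0$ and $W_\tau$ at infinity, which you do not address. So the identification step in your proposal, while repairable, is not complete as written, whereas the paper's extension-plus-Str\"omberg argument settles it immediately.
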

\begin{proof}
Lemmas \ref{le:uniform-bound}, \ref{le:gradient-bound}, and \ref{le:holder-bound-in-time} 
together with Ascoli-Arzel\` a's theorem and the Cantor diagonal process yield  
a positive vanishing sequence of numbers $\eps_n$ and a continuous function $v = v(x,t)$ in $\Omega\times(0,+\infty)$ such that, as $n \to \infty$, the $v^{\eps_n}$'s converge to $v$ uniformly on every compact set contained in $\Omega \times(0,+\infty).$ Hence, by  Lemma \ref{le:uniform-estimate}, 
\begin{equation}
\label{estimate for limit function}
\frac{\delta_1}{\delta_2}\cdot\frac 1{4t} d(x)^2 \le v(x,t)\le \frac{\delta_2}{\delta_1}\cdot\frac 1{4t} d(x)^2\ \mbox{ for } (x,t)\in\Omega \times (0,+\infty).
\end{equation}
\par
Define a function $V = V(x,t)$ on $\mathbb R^N \times (0,+\infty)$ by
\begin{equation*}
\label{extended solution}
V(x,t) = \left\{\begin{array}{rll}
 v(x,t)\ &\mbox{ if }\ x \in \Omega,
\\
0\ &\mbox{ if  }\ x  \not\in \Omega.
\end{array}\right.
\end{equation*}
Since both $d^2$ and its gradient vanish on $\partial\Omega$,  (\ref{estimate for limit function}) 
yields that $V$ is continuous on $\mathbb R^N \times (0,+\infty)$, differentiable 
at any point on $\partial\Omega \times (0,+\infty)$, and that 
both $V $ and $\nabla V$ vanish on $\partial\Omega \times (0,+\infty)$. 
Also, (\ref{estimate for limit function}) yields that $\lim\limits_{t \to 0^+} v(x,t) = +\infty$ if $x \in \Omega$. \par
Therefore, by using the fact that $v^\varepsilon$ solves problem (\ref{pressure-equation})-(\ref{initial-pressure}), with the help of Crandall, Ishii, and Lions \cite{CrIL},  we see that  $V$ is a viscosity solution of the following Cauchy problem:
\begin{eqnarray}
&V_t = - |\nabla V|^2\quad\mbox{ in } &\mathbb R^N\times (0,+\infty),\nonumber\\
&V=0 \quad\qquad\ \mbox{ on } &(\mathbb R^N\setminus\Omega)\times \{0\},\label{eiconal-equation}\\
&V=+\infty \quad\ \ \ \mbox{ on } &\Omega\times
\{0\}.\nonumber
\end{eqnarray}
Moreover,  since a uniqueness result of Str\"omberg \cite{Str} tells us  that the Hopf-Lax formula provides  the unique viscosity solution of the Cauchy problem \eqref{eiconal-equation}, we must have that, for any $(x,t) \in \mathbb R^N \times (0,+\infty)$,
$$
V(x,t) = \inf\left\{ \varphi(\xi) + \frac {|x-\xi|^2}{4t}\ :\ \xi \in \mathbb R^N\ \right\} = \frac {\left(\dist(x, \mathbb R^N \setminus \Omega) \right)^2}{4t},
$$
where $\varphi =\varphi(\xi)$ is the lower semicontinuous initial data defined by 
$$
\varphi(\xi) =\left\{\begin{array}{rll}
 +\infty\ &\mbox{ if }\ \xi \in \Omega,
\\
0\ &\mbox{ if  }\ \xi  \not\in \Omega.
\end{array}\right.
$$ 
\par 
By the uniqueness of $V$,  the whole sequence $\{v^\eps\}$ converges as $\eps \to 0^+$, and we get 
our claim.
\end{proof}
\vskip.2cm
\begin{proofD}
The  desired result follows by simply setting $t=1$ and then $\varepsilon = t$ in Theorem \ref{th:convergence}. 
\end{proofD}
\vskip.2cm
By a simple argument, we can extend Theorem \ref{th:varadhan-nonlinear} to
the important case of non-homogeneous boundary values.
%%                                        %%
%%  Theorem 3.7 begins. %%
%%                                         %%
\begin{theorem}
\label{th:varadhan-nonlinear-2} 
Let $f=f(x)$ be a continuous function on $\pa\Om$ such that
 \begin{equation}
 \label{bounded from above and below}
 0 < b_1 \le f(x) \le b_2\ \mbox{ for all } x \in \partial\Omega,
 \end{equation}
 for some positive constants $b_1$ and $b_2.$ 
Let $u$ be the solution of problem \eqref{nonlinear-eq}, \eqref{nonlinear-Dirichlet-2}, \eqref{initial}.
\par
Then, 
$$
\lim_{t\to 0^+} -4t\Phi(u(x,t))=d(x)^2
$$
uniformly on every compact set in $\Om.$
\end{theorem}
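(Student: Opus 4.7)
The plan is to sandwich the non-homogeneous solution between two solutions of \eqref{nonlinear-eq}, \eqref{initial} with constant boundary data $b_1$ and $b_2$, and then invoke Theorem \ref{th:varadhan-nonlinear} (suitably generalised) on those two auxiliary solutions. Let $u_j$ ($j=1,2$) denote the unique bounded solution of \eqref{nonlinear-eq}, \eqref{initial} with the Dirichlet condition $u_j = b_j$ on $\partial\Omega\times(0,+\infty)$. Writing the quasilinear equation as $w_t = \phi'(\psi(w))\,\Delta w$ for $w = \phi(u)$ with $\psi = \phi^{-1}$ shows that it is uniformly parabolic with coefficients bounded between $\delta_1$ and $\delta_2$, so the standard parabolic comparison principle applies. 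From $b_1 \le f \le b_2$ on $\partial\Omega$ and the common zero initial data I obtain $u_1 \le u \le u_2$ in $\Omega\times(0,+\infty)$.

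Next I verify that Theorem \ref{th:varadhan-nonlinear} still holds when its boundary value $1$ is replaced by an arbitrary positive constant $c$. The cleanest route is a rescaling: if $u_c$ solves the problem with constant boundary datum $c$, then $\tilde u := u_c/c$ satisfies a problem of exactly the same type but with boundary value $1$ and with the new nonlinearity $\tilde\phi(s) := \phi(cs)/c$, which still obeys \eqref{smoothness}--\eqref{uniformly-parabolic}. A short change of variable gives $\tilde\Phi(s) = \Phi(cs) - \Phi(c)$, hence
$$
-4t\,\Phi(u_c(x,t)) = -4t\,\tilde\Phi(\tilde u(x,t)) + 4t\,\Phi(c).
$$
Since $4t\,\Phi(c) \to 0$ and the first term on the right converges to $d(x)^2$ uniformly on compact subsets of $\Omega$ by Theorem \ref{th:varadhan-nonlinear} applied to $\tilde u$, the same conclusion holds for $u_c$, and in particular for $u_1$ and $u_2$.

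Finally, because $\Phi'(s) = \phi'(s)/s > 0$ for $s > 0$, the function $\Phi$ is strictly increasing, and the sandwich $u_1 \le u \le u_2$ transforms, after multiplication by $-4t$, into
$$
-4t\,\Phi(u_2(x,t)) \le -4t\,\Phi(u(x,t)) \le -4t\,\Phi(u_1(x,t)).
$$
Passing $t \to 0^+$ and using the uniform convergence of both ends to $d(x)^2$ established in the previous paragraph yields the desired uniform convergence on every compact subset of $\Omega$. The only mildly delicate step is the extension of Theorem \ref{th:varadhan-nonlinear} to an arbitrary positive constant boundary datum; this is handled cleanly by the rescaling above (alternatively, by re-inspecting the proof and noting that $\phi(1)$ enters only through the auxiliary Dirichlet data in Lemma \ref{le:comparison} and can be replaced by $\phi(c)$ without affecting the rest of the argument).
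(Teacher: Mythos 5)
Your proof is correct and follows essentially the same path as the paper's: both sandwich $u$ between solutions $u^{1},u^{2}$ with constant boundary data $b_1,b_2$ via the comparison principle, extend Theorem \ref{th:varadhan-nonlinear} to a constant datum $c$ by the rescaling $\tilde u=u_c/c$, $\tilde\phi(s)=\phi(cs)/c$, with $\tilde\Phi(s)=\Phi(cs)-\Phi(c)$, and then squeeze using the monotonicity of $\Phi$. The only cosmetic difference is that the paper carries out the rescaling with $c=b_j$ directly rather than stating it for a generic $c$.
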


\begin{proof}
Consider the unique bounded solutions $u^j=u^j(x,t)\ (j=1,2)$ of the following initial-boundary value problems:
\begin{eqnarray*}
&u^j_t =\Delta \phi(u^j)\quad&\mbox{ in }\ \Omega\times (0,+\infty),\label{nonlinear-eq-j}\\
&u^j= b_j\quad &\mbox{ on }\ \partial\Omega\times
(0,+\infty),\label{nonlinear-Dirichlet-j}\\
&u^j=0 \quad & \mbox{ on }\ \Omega\times
\{0\}.\label{nonlinear-initial-j}
\end{eqnarray*}
Then it follows from (\ref{bounded from above and below}) and the comparison principle that
\begin{equation}
\label{upper and lower solutions  j}
u^1 \le u \le u^2\ \mbox{ in }\ \Omega\times (0,+\infty).
\end{equation}
\par
With the help of Theorem \ref{th:varadhan-nonlinear}, we see that, 
as $t\to 0^+,$ the function 
$-4t\Phi(u^j(x,t))$ converges to the function $d(x)^2$ uniformly
on every compact set in $\Omega$ for each $j=1,2$.
Indeed, for each $j =1,2,$ we set
$$
U = \frac {u^j}{b_j},\ \tilde{\phi}(s) = \frac 1{b_j} \phi(b_j s)\ \mbox{ for } s \in \mathbb R,\ \mbox{ and }\ \tilde{\Phi}(s) = \int_1^s\frac {\tilde{\phi}^\prime(\xi)}\xi\ d\xi\  \mbox{ for } s > 0.
$$
Then  it follows that 
\begin{equation}
\label{relationship between Phi and tildePhi}
\tilde{\phi}^\prime(s) = \phi^\prime(b_js)\ \mbox{ for } s \in \mathbb R,\ \tilde{\Phi}(s) = \Phi(b_js)-\Phi(b_j)\ \mbox{ for } s >0,
\end{equation}
and 
\begin{eqnarray}
&U_t =\Delta \tilde{\phi}(U)\quad&\mbox{ in }\ \Omega\times (0,+\infty),\label{nonlinear-eq-m}\\
&U= 1\quad&\mbox{ on }\ \partial\Omega\times
(0,+\infty),\label{nonlinear-Dirichlet-m}\\
&U=0 \quad& \mbox{ on }\ \Omega\times
\{0\}.\label{nonlinear-initial-m}
\end{eqnarray}
Thus,  applying Theorem \ref{th:varadhan-nonlinear} to $U$ yields that, as $t\to 0^+,$ the function
$-4t\tilde{\Phi}(U(x,t))$ converges to the function $d(x)^2$ uniformly
on every compact set in $\Omega$. Hence, with the aid of the second equality of (\ref{relationship between Phi and tildePhi}), this means that, 
as $t\to 0^+,$ the function
$-4t\Phi(u^j(x,t))$ converges to the function $d(x)^2$ uniformly
on every compact set in $\Omega$.

  On the other hand, since $\Phi$ is increasing in $s >0$, we have from (\ref{upper and lower solutions j}) that
$$
-4t \Phi(u^1) \ge -4t\Phi(u) \ge -4t\Phi(u^2)\ \mbox{ in }\ \Omega\times (0,+\infty),
$$
which implies that, as $t\to 0^+,$ the function
$-4t\Phi(u(x,t))$ converges to the function $d(x)^2$ uniformly
on every compact set in $\Omega$. 
\end{proof}
\vskip.2cm
\begin{proofC}
We use Bernstein's technique (see \cite{EI}, \cite{Koi}, \cite{ES}, and \cite{LSV}). Let $r, \tau$ and $T$
be positive numbers such that $\tau<2\tau<T$ and $K \subset B_r(0) \times [2\tau,T].$  
Take $\zeta \in C^\infty(B_{2r}(0) \times (\tau, T])$ satisfying
\begin{eqnarray*}
&& 0 \le \zeta\le 1 \ \mbox{ and } \ \zeta_t \ge 0\ \mbox{ in } B_{2r}(0) \times (\tau, T],
\\
&&\zeta = 1\ \mbox{ on }\ B_r(0) \times [2\tau,T],
\mbox{ and } \ \mbox{ supp } \zeta \subset B_{2r}(0) \times (\tau,T].
\end{eqnarray*}
In the sequel of this proof, we will use the constants $\eps_0, c_1$ and $c_2$ of Lemma
\ref{le:uniform-bound} relative to the compact set
$\ovr{B_{2r}(0)}\times [\tau, T].$
\par
Consider the function $z = z(x,t)$ defined by
\begin{equation}
\label{auxiliary-function}
z = \zeta^2|\nabla v^\varepsilon|^2-\lambda v^\varepsilon,
\end{equation}
where $\lambda > 0$ is a constant to be determined later, and $0 < \eps \le \eps_0.$
Suppose that $(x_0,t_0)$ is a point in $B_{2r}(0) \times (\tau,T]$ satisfying
$$
\zeta(x_0,t_0) > 0\ \mbox{ and }\ \max_{\overline{B_{2r}(0)} \times [\tau, T]} z = z(x_0,t_0).
$$
At $(x_0,t_0)$ we then have
\begin{equation}
\label{at-maximum}
z_t\ge0,\ z_{x_i}=0,\ \mbox{ and }\ \Delta z \le 0,
\end{equation}
and hence
\begin{equation*}
\label{parabolic-inequality}
0 \le z_t - \varepsilon\phi^\prime\left(\Psi(-\varepsilon^{-1}v^\varepsilon)\right)\Delta z.
\end{equation*}
\par
The following inequality holds at $(x_0,t_0)$ for some positive 
constants $A_1$ and $A_2$ independent on $(x_0,t_0)$ and $\eps:$
\begin{equation}
\label{main-inequality}
\lambda |\nabla v^\varepsilon|^2 \le A_1|\nabla v^\varepsilon|^2+A_2\zeta|\nabla v^\varepsilon|^3
-2\zeta^2|\nabla v^\varepsilon|^2\phi^{\prime\prime}\Psi^\prime\Delta v^\varepsilon - \varepsilon\phi^\prime\zeta^2|\nabla^2 v^\varepsilon|^2.
\end{equation}
It is a consequence of \eqref{at-maximum}
 and some lengthy calculations
that, for the reader's convenience, will be carried out in the Appendix A.
\par
Now, we want to bound the third and fourth addendum on the right-hand side of \eqref{main-inequality}.
The bound for the latter addendum,
$$
- \varepsilon\phi^\prime\zeta^2|\nabla^2 v^\varepsilon|^2 \le - \varepsilon\delta_1\zeta^2|\nabla^2 v^\varepsilon|^2,
$$
easily follows from \eqref{uniformly-parabolic}. 
In order to bound the former one, we use the fact that
$\phi\in C^2(\RE)$ and Lemma \ref{le:uniform-bound}, the algebraic inequality $2ab\le a^2+b^2,$ 
and the key inequality
\begin{equation}
\label{most-important}
\displaystyle
0 < \Psi^\prime\left(-\varepsilon^{-1}v^\varepsilon\right) = \frac {\Psi\left(-\varepsilon^{-1}v^\varepsilon\right)}{\phi^\prime} \le \frac 1{\delta_1}e^{-\frac {v^\varepsilon}{\varepsilon\delta_2}} \le \frac 1{\delta_1}e^{-\frac {c_1}{\varepsilon\delta_2}},
\end{equation}
which follows from \eqref{inverse-diff}, \eqref{uniformly-parabolic} and \eqref{ine:third}.
With these three ingredients, we show that 
\begin{eqnarray*}
-2\zeta^2|\nabla v^\varepsilon|^2\phi^{\prime\prime}\Psi^\prime\Delta v^\varepsilon \le 
\frac 1{\delta_1}e^{-\frac {c_1}{\varepsilon\delta_2}}\,\zeta^2( A_3|\nabla v^\varepsilon|^4 + |\nabla^2 v^\varepsilon|^2),
\end{eqnarray*}
at $(x_0,t_0),$ for some positive constant $A_3$ independent of $(x_0,t_0)$ and $\eps.$ 
\par
Set
$$
M = \max_{\overline{B_{2r}(0)} \times [\tau, T]} \zeta|\nabla v^\varepsilon|, \ \ \lambda =  \frac {M^2+1}{2(c_2+1)},
$$
and choose $\varepsilon_*$ in $(0,\varepsilon_0]$ so small to obtain that
$$
\frac{A_3}{\delta_1}e^{-\frac {c_1}{\varepsilon\delta_2}}\le \frac 1{4(c_2+1)}\ \mbox{ and }\ 
\frac 1{\delta_1}e^{-\frac {c_1}{\varepsilon\delta_2}} \le \varepsilon\delta_1
$$
for all $\varepsilon \in (0,\varepsilon_*].$ Then, with these choices of constants,
from (\ref{main-inequality}) and the aforementioned
bounds on the second-order derivatives of $v^\eps,$ we have that
\begin{equation}
\label{second-main-inequality}
\frac {M^2+1}{4(c_2+1)}|\nabla v^\varepsilon|^2 \le A_1|\nabla v^\varepsilon|^2 + A_2M|\nabla v^\varepsilon|^2
\end{equation}
at $(x_0,t_0),$ for any $\varepsilon \in (0,\varepsilon_*].$
\par 
Thus, if $\nabla v^\varepsilon(x_0,t_0) \not=0,$ from (\ref{second-main-inequality}) we get
$$
\frac {M^2+1}{4(c_2+1)} \le A_1 + A_2M,
$$
which yields the desired gradient estimate at once. 
If $\nabla v^\varepsilon(x_0,t_0)=0$, instead, we use 
the definition \eqref{auxiliary-function} of $z$ to infer that
$$
M^2 \le \max z+ \lambda \max v^\varepsilon \le \lambda \max v^\varepsilon  \le \frac {M^2+1}{2(c_2+1)}\, c_2 \le \frac {M^2}2 + \frac 12,
$$
since $z(x_0,t_0)=-\la v^\eps(x_0,t_0)<0.$ Therefore, $M \le 1$ and this completes the proof.
\end{proofC}
\vskip2ex
\noindent
{\bf Remark.} \ Lions, Souganidis, and Vazquez \cite{LSV} consider the pressure equation for the porous medium equation:
$$
(v_m)_t= (m-1)v_m\Delta v_m + |\nabla v_m|^2\ \mbox{ for }\ m > 1,
$$
and consider the asymptotic behavior as $m \to 1^+$.  They get the interior gradient estimate for $v_m$ independent on $m$ by a technique similar to ours. We follow the outline of their proof but we use inequality (\ref{most-important}) in order to overcome the difficulty caused by $\phi^\prime= \phi^\prime\left(\Psi\left(-\varepsilon^{-1}v^\varepsilon\right)\right)$ in equation (\ref{pressure-equation}).

%%%%%%%%%%%%%%%%%%%%%%%%%%%%%%%%%%%%%%
%%%%%  Section 4 begins %%%%%%%%%%%%%%%%%%%%%%%
%%%%%%%%%%%%%%%%%%%%%%%%%%%%%%%%%%%%%%

\setcounter{equation}{0}
\setcounter{theorem}{0}

\section{On the Cauchy problem}
\label{section4}

Let $\Omega$ be a domain given in (\ref{domains we consider}) and consider the unique bounded solution $u=u(x,t)$ of the following Cauchy problem:
\begin{equation}
\label{Cauchy}
u_t = \Delta \phi(u)\ \mbox{ in } \mathbb R^N \times (0,+\infty),\ \mbox{ and }\ u = \chi_{\mathbb R^N\setminus\Omega}\ \mbox{ on }\mathbb R^N \times\{0\},
\end{equation}
where $ \chi_{\mathbb R^N\setminus\Omega}$ denotes the characteristic function of the set $\mathbb R^N \setminus\Omega$ and $\phi$ satisfies the assumptions \eqref{smoothness}-\eqref{uniformly-parabolic}. The purpose of this section is to prove the following result.
%%%%%%%%%%%%%%%%%%%%%%%%%%%%%%%%%%%%%%
%%%%%  Theorem 4.1 begins %%%%%%%%%%%%%%%%%%%%%%%
%%%%%%%%%%%%%%%%%%%%%%%%%%%%%%%%%%%%%%
\begin{theorem}
\label{th:Cauchy} 
Theorems  {\rm \ref{th:varadhan-nonlinear}}, {\rm\ref{th:matzoh-nonlinear}}, and {\rm\ref{th:matzoh-nonlinear-unbounded}} also hold for the unique bounded solution $u$ of the Cauchy problem {\rm(\ref{Cauchy})}.
\end{theorem}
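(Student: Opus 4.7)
My plan is to reduce Theorem \ref{th:Cauchy} to two tasks: a Varadhan-type short-time asymptotic for the Cauchy problem \eqref{Cauchy}, and then a rerun of the moving-planes arguments of Section \ref{section2}, which use only this asymptotic together with parabolic comparison, the strong maximum principle, and Hopf's boundary lemma. Once
$$
\lim_{t \to 0^+} -4 t \Phi(u(x,t)) = d(x)^2
$$
uniformly on compact subsets of $\Omega$ is established for the Cauchy problem, the symmetry conclusions of Theorems \ref{th:matzoh-nonlinear} and \ref{th:matzoh-nonlinear-unbounded} should follow from essentially the same moving-planes procedure.

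To establish the Varadhan extension, I would mirror Section \ref{section3}. Setting $w = \phi(u)$, the Cauchy analog of Lemma \ref{le:comparison} gives $w_1 \le w \le w_2$ on $\mathbb R^N \times (0, +\infty)$, where each $w_j$ solves the linear Cauchy problem $(w_j)_t = \delta_j \Delta w_j$ with initial data $\phi(1) \chi_{\mathbb R^N \setminus \Omega}$; for these explicit Gaussian convolutions, a direct Laplace-method argument (equivalently, the classical large-deviations asymptotic for Brownian motion) yields $-4 \delta_j t \log w_j(x, t) \to d(x)^2$ uniformly on compact subsets of $\Omega$, replacing the linear input of Lemma \ref{le:uniform-estimate}. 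The rescaled function $v^\varepsilon(x, t) = -\varepsilon \Phi(u(x, \varepsilon t))$ then satisfies \eqref{pressure-equation} on $\mathbb R^N \times (0, +\infty)$ (with $u > 0$ there by the strong maximum principle), and Lemmas \ref{le:uniform-bound}, \ref{le:gradient-bound}, \ref{le:holder-bound-in-time}, being purely interior estimates, transfer verbatim on compact subsets of $\Omega \times (0, +\infty)$. Extending the limit $v$ by $V = 0$ on $(\mathbb R^N \setminus \overline\Omega) \times (0, +\infty)$---justified since $u(x, t) \to 1$ at interior points of $\mathbb R^N \setminus \Omega$ as $t \to 0^+$---produces a viscosity solution of the eikonal Cauchy problem \eqref{eiconal-equation}, and Strömberg's uniqueness again identifies $V(x, t) = d(x, \mathbb R^N \setminus \Omega)^2/(4t)$.

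For the symmetry results, in place of the cylinder $\Omega_\ell \times (0, +\infty)$ used in the IBVP proofs, I compare $u$ with its reflection $w(x, t) = u(x^{\lambda_*}, t)$ on the half-cylinder $\{x \cdot \ell < \lambda_*\} \times (0, +\infty)$. The equality $w = u$ on $\pi_{\lambda_*} \times (0, +\infty)$ is immediate, while $w(\cdot, 0) \le u(\cdot, 0)$ on this half-space follows from the defining property of the critical reflection (the portion of $\mathbb R^N \setminus \Omega$ on the large side, reflected, lies inside $\mathbb R^N \setminus \Omega$). The parabolic comparison principle applied on the half-space to $W = \phi(w)$ and $U = \phi(u)$ (which satisfy linear uniformly parabolic equations with bounded coefficients) then yields $w \le u$, and the strong maximum principle upgrades this to strict inequality; cases (i) and (ii) produce the same contradictions as in the proofs of Theorems \ref{th:matzoh-nonlinear} and \ref{th:matzoh-nonlinear-unbounded}, with Hopf's boundary point lemma at $Q^* \in \pi_{\lambda_*}$ in case (ii). The main obstacle is the Varadhan-type linear asymptotic itself: ensuring uniformity on compact subsets of $\Omega$ despite the jump discontinuity of the initial data across $\partial\Omega$ requires a careful application of Laplace's method that makes essential use of the $C^2$ regularity of $\partial\Omega$.
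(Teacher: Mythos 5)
Your plan to establish the Varadhan-type asymptotic by sandwiching $w = \phi(u)$ between solutions $w_1, w_2$ of the \emph{linear Cauchy problems} $\partial_t w_j = \delta_j \Delta w_j$ (initial data $\phi(1)\chi_{\mathbb R^N\setminus\Omega}$) contains a genuine gap: there is no ``Cauchy analog of Lemma \ref{le:comparison}.'' The proof of Lemma \ref{le:comparison} relies on the sign condition $\Delta w \ge 0$, which is what makes $\delta_1\Delta w \le \phi'(u)\Delta w \le \delta_2\Delta w$ a chain rather than two incompatible inequalities; this sign condition holds for the initial-boundary value problem because, with constant boundary data and zero initial data, $u$ is nondecreasing in $t$, so $\Delta w = w_t/\phi'(u) \ge 0$. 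For the Cauchy problem \eqref{Cauchy} the solution \emph{decreases} in time at points of $\mathbb R^N \setminus \overline\Omega$ (it starts from $1$ there), so $\Delta w$ changes sign, and wherever $\Delta w < 0$ one has $\phi'(u)\Delta w \ge \delta_2 \Delta w$, not $\le$. Consequently $w_1 \le w \le w_2$ fails on $\mathbb R^N\setminus\overline\Omega$; and since there is no a priori control of the parabolic boundary data of $w$ on $\partial\Omega\times(0,\infty)$, one cannot even restrict the comparison to $\Omega$ without further work.

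The paper resolves this with a different decomposition, which you did not anticipate. It first proves an ODE lemma (Lemma \ref{le: a boundary value problem for ODE}) producing a profile $f$ with $f(0)>0$, then shows (Lemma \ref{le:estimate from below}) that $f(t^{-1/2}d^*(x))$ is a subsolution of \eqref{Cauchy} near $\partial\Omega$ for small time, which yields the a priori bound $c_0 \le u < 1$ on $\partial\Omega\times(0,1]$. With this, $u|_{\Omega}$ is sandwiched between the two \emph{initial-boundary value} solutions $u^\pm$ with constant boundary data $c_0$ and $1$, and the Varadhan limit is imported from Theorem \ref{th:varadhan-nonlinear-2} — the non-homogeneous boundary version of Theorem \ref{th:varadhan-nonlinear} whose presence in the paper is precisely to enable this reduction. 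This also avoids rerunning the viscosity-solution/Str\"omberg step on all of $\mathbb R^N$, which in your route would require control of $v^\varepsilon$ across $\partial\Omega$ and in the exterior without the exact Dirichlet condition $v^\varepsilon = 0$ used in the proof of Theorem \ref{th:convergence}; you only assert pointwise convergence of $u$ to $1$ outside $\overline\Omega$, which is not enough for the uniformity and continuity across $\partial\Omega$ that the viscosity argument needs. Your sketch of the symmetry part (moving planes on the half-space $\{x\cdot\ell<\lambda_*\}\times(0,\infty)$, using $G'_{\lambda_*}\subset G$ to order the initial data, then comparison, strong maximum principle, and Hopf's lemma) is in agreement with the paper's brief indication and looks fine.
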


\vskip.2cm
Let us start with two lemmas.
%%%%%%%%%%%%%%%%%%%%%%%%%%%%%%%%%%%%%%
%%%%%  Lemma 4.2 begins %%%%%%%%%%%%%%%%%%%%%%%
%%%%%%%%%%%%%%%%%%%%%%%%%%%%%%%%%%%%%%
\begin{lemma}
\label{le: a boundary value problem for ODE} 
There exist a small  $\delta > 0$ and a $C^2$-function $f = f(\xi)$ on $\mathbb R$ satisfying
\begin{eqnarray*}
&& (\phi^\prime(f)f^\prime)^\prime + \frac 12(\xi+2\delta)f^\prime = 0\ \mbox{ and }\ f^\prime < 0\ \mbox{ in }\ \mathbb R, \label{2nd-order ODE}
\\
&& \mbox{ and }\ 1 > f(-\infty) > f(0) > 0 >  f(+\infty) > -\infty.\label{boundary values}
\end{eqnarray*}
\end{lemma}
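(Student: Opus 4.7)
The plan is to interpret the equation as the ODE for a self-similar profile of the one-dimensional nonlinear diffusion $u_t = \phi(u)_{xx}$, and then to produce such a profile by a shooting argument. The substitution $\eta = \xi + 2\delta$ recasts the equation as
$$
(\phi'(g)\,g')' + \frac{\eta}{2}\,g' = 0,
$$
which is exactly the ODE satisfied by profiles $g$ for which $u(x,t) = g(x/\sqrt{t})$ solves $u_t = \phi(u)_{xx}$ on $\mathbb R\times(0,\infty)$. The stated requirements on $f$ translate into $g'<0$, $g(-\infty)\in(0,1)$, $g(+\infty)\in(-\infty,0)$, and $g(2\delta)>0$.

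I would study this $\eta$-equation as an initial value problem with $g(0) = \alpha \in (0,1)$ and $g'(0) = \beta < 0$. Setting $G := \phi'(g)\,g'$, the equation becomes $G' = -\frac{\eta}{2\phi'(g)}\,G$, and this integrates explicitly to
$$
G(\eta) = \phi'(\alpha)\,\beta\,\exp\!\left(-\int_0^\eta \frac{s\,ds}{2\phi'(g(s))}\right).
$$
Combined with the uniform parabolicity $\delta_1 \le \phi' \le \delta_2$, this formula simultaneously yields: (i) $G<0$, and hence $g'<0$ throughout the maximal interval of existence; (ii) Gaussian decay of $|g'|$ as $|\eta|\to\infty$, giving global existence together with finite limits $g(\pm\infty)$ that depend continuously on $(\alpha,\beta)$; and (iii) quantitative bounds linear in $|\beta|$, of the form $c_1|\beta| \le g(-\infty)-g(0) \le c_2|\beta|$ and likewise for $g(0)-g(+\infty)$, with positive $c_1,c_2$ depending only on $\delta_1,\delta_2$.

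The shooting step is then transparent. I fix $\alpha>0$ so small that $\alpha(c_1+c_2)<c_1$, and then pick $|\beta|$ in the (nonempty) open interval $\bigl(\alpha/c_1,\,(1-\alpha)/c_2\bigr)$; these choices force $g(+\infty)<0$ and $g(-\infty)<1$ simultaneously. Since $g$ is strictly decreasing with $g(0)=\alpha>0>g(+\infty)$, it has a unique zero $\eta_0>0$. Any choice $\delta\in(0,\eta_0/2)$ and $f(\xi):=g(\xi+2\delta)$ then produces a function meeting every requirement: $f(0)=g(2\delta)>0$, $f'=g'<0$, and $f(\pm\infty)=g(\pm\infty)$ lie in the prescribed intervals.

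The main obstacle is the quantitative shooting step: one needs the linear bounds on $g(\pm\infty)-g(0)$ to be tight enough, and uniform enough in $\beta$, to reconcile the two asymptotic constraints $g(+\infty)<0$ and $g(-\infty)<1$ at a single value of $\beta$. This is precisely where the uniform parabolicity hypothesis \eqref{uniformly-parabolic} is indispensable; no finer information about $\phi$ is needed.
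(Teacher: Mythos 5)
Your proof is correct and follows essentially the same route as the paper's: reduce to the un-shifted ODE, rewrite it as a first-order system for $(g, G)$ with $G=\phi'(g)g'$, integrate the $G$-equation explicitly, use the uniform parabolicity bounds $\delta_1\le\phi'\le\delta_2$ to trap $g(\pm\infty)-g(0)$ between Gaussian-integral multiples of the initial slope, and then choose initial data to force $0<g(-\infty)<1$ and $g(+\infty)<0$ before translating. The only (cosmetic) difference is that the paper writes down one explicit admissible pair $(h_0,H_0)$ in closed form, whereas you show that an open range of $(\alpha,\beta)$ works via the two-sided linear bounds — both amount to the same estimate.
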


\noindent
{\it Proof. }  It suffices to show that there exists  a $C^2$-function $h= h(\xi)$ on $\mathbb R$ satisfying
\begin{eqnarray}
&& (\phi^\prime(h)h^\prime)^\prime + \frac 12 \xi\ h^\prime = 0\ \mbox{ and }\ h^\prime < 0\ \mbox{ in }\ \mathbb R, \label{2nd-order ODE for h}
\\
&& \mbox{ and }\ 1 > h(-\infty) > h(0) > 0 >  h(+\infty) > -\infty.\label{boundary values for h}
\end{eqnarray}
Indeed, setting $f(\xi) = h(\xi + 2\delta)$ for sufficiently small $\delta >0$ gives the desired solution $f.$ 
\par
The assumptions \eqref{smoothness}-\eqref{uniformly-parabolic} guarantee existence and uniqueness, 
{\it on the whole $\RE,$} of the solution $(h,H)$ of the Cauchy problem for the system of ordinary differential equations 
\begin{equation}
\label{system of ODE}
h^\prime = \frac H{\phi^\prime(h)},\ H^\prime = -\frac 12 \xi  \frac H{\phi^\prime(h)}, \mbox{ and } (h(0),H(0)) = (h_0,H_0),
\end{equation}
(obtained by letting $H = \phi^\prime(h)h^\prime$ in \eqref{2nd-order ODE for h}); here $h_0 >0$ and $H_0 < 0$ are given numbers. Also, by uniqueness we infer that $H< 0$ on $\mathbb R$  and hence $h^\prime < 0$ on $\mathbb R$. 
\par
Thus, with the help of (\ref{uniformly-parabolic}), by integrating the second equation
in \eqref{system of ODE}, we have that
$$
H_0\exp\left\{ -\frac{\xi^2}{4\delta_2}\right\} \le H(\xi) \le  H_0\exp\left\{ -\frac{\xi^2}{4\delta_1}\right\} <0
$$
and 
$$
\frac{H_0}{\delta_1}\exp\left\{ -\frac{\xi^2}{4\delta_2}\right\}\le h'(\xi)\le
\frac{H_0}{\delta_2}\exp\left\{ -\frac{\xi^2}{4\delta_1}\right\}<0
$$
for $\xi\in\RE;$ hence  
$$
h_0+ \frac{H_0}{\delta_1}\int_0^\xi\exp\left\{ -\frac{\eta^2}{4\delta_2}\right\} d\eta \le h(\xi)\le h_0 +\frac{H_0}{\delta_2}\int_0^\xi \exp\left\{ -\frac{\eta^2}{4\delta_1}\right\} d\eta,
$$
for $\xi > 0,$ and 
$$
h_0 +\frac{H_0}{\delta_2}\int_0^\xi \exp\left\{ -\frac{\eta^2}{4\delta_1}\right\} d\eta\le h(\xi) \le h_0+ \frac{H_0}{\delta_1}\int_0^\xi\exp\left\{ -\frac{\eta^2}{4\delta_2}\right\} d\eta.
$$
for $\xi < 0.$
By letting $\xi \to +\infty$ and $\xi \to -\infty$, respectively, we get
\begin{eqnarray*}
&& h_0 + \frac {H_0}{\delta_1}\sqrt{\pi\delta_2} \le h(+\infty) \le h_0+ \frac {H_0}{\delta_2}\sqrt{\pi\delta_1},\label{limit at +infinity}
\\
&&h_0 - \frac {H_0}{\delta_2}\sqrt{\pi\delta_1} \le h(-\infty) \le h_0-\frac {H_0}{\delta_1}\sqrt{\pi\delta_2}.\label{limit at -infinity}
\end{eqnarray*}
\par
Therefore, \eqref{boundary values for h} is obtained by setting
$$
h_0 = \frac {\delta_1^{3/2}}{2(\delta_1^{3/2}+\delta_2^{3/2})} \mbox{ and }\ H_0 = - \frac {\delta_1\delta_2}{\sqrt{\pi}(\delta_1^{3/2}+\delta_2^{3/2})}
$$
in the last two formulas.
\qed
%%%%%%%%%%%%%%%%%%%%%%%%%%%%%%%%%%%%%%
%%%%%  Lemma 4.3 begins %%%%%%%%%%%%%%%%%%%%%%%
%%%%%%%%%%%%%%%%%%%%%%%%%%%%%%%%%%%%%%
\begin{lemma}
\label{le:estimate from below} 
There exists  a constant $c_0 > 0$ satisfying
$$
c_0 \le u < 1\ \mbox{ on }\ \partial\Omega \times (0,1].
$$
\end{lemma}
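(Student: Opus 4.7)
My plan is to handle $u<1$ and $u\ge c_0$ separately on $\partial\Omega\times(0,1]$. For both, the idea is to combine the strong maximum/minimum principle in the bulk with a self-similar barrier built from Lemma~\ref{le: a boundary value problem for ODE} for the initial boundary layer. I would first note that by the strong minimum principle applied to $u$ (whose initial datum $\chi_{\mathbb R^N\setminus\Omega}$ is nontrivial because $\Omega$ has boundary) and the strong maximum principle applied to $v=1-u$, which solves a problem of the same type with $\tilde\phi(s)=\phi(1)-\phi(1-s)$ (also satisfying \eqref{smoothness}-\eqref{uniformly-parabolic}) and initial datum $\chi_\Omega\not\equiv 0$, we obtain $0<u<1$ pointwise on $\mathbb R^N\times(0,\infty)$. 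Continuity of bounded solutions of such uniformly parabolic equations, together with compactness of $\partial\Omega\times[\tau,1]$, then yields uniform bounds $c_1(\tau)\le u\le 1-c_2(\tau)$ for every $\tau>0$. It remains to control the layer $t\in(0,\tau]$, and I focus on the lower bound; the upper bound is entirely symmetric via the interior sphere condition at $p$ applied to $v=1-u$.

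For each $p\in\partial\Omega$, by $C^2$-regularity and boundedness of $\partial\Omega$ the exterior sphere condition holds uniformly: there exists $r_0>0$ independent of $p$ and $z_p=p+r_0\nu_p$ with $B_{r_0}(z_p)\subset\mathbb R^N\setminus\Omega$, tangent to $\partial\Omega$ only at $p$. My barrier is the radial self-similar function
\[
\underline u_p(x,t) = f\!\left(\frac{r_0-|x-z_p|}{\sqrt t}-2\delta\right).
\]
Using the ODE from Lemma~\ref{le: a boundary value problem for ODE} and the radial form of the Laplacian, a direct computation (substituting $\xi=(r_0-|x-z_p|)/\sqrt t-2\delta$ and invoking the ODE to cancel the planar part) gives
\[
(\underline u_p)_t-\Delta\phi(\underline u_p) = \frac{(N-1)\,\phi'(f)\,f'}{|x-z_p|\,\sqrt t}\le 0,
\]
since $\phi'>0$ and $f'<0$; thus $\underline u_p$ is a classical subsolution on $(\mathbb R^N\setminus\{z_p\})\times(0,\infty)$, with the crucial feature $\underline u_p(p,t)=f(-2\delta)>0$ for every $t>0$, a constant independent of $p$.

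The main technical step is the comparison $\underline u_p\le u$ near $p$. Inside the exterior ball the initial data are compatible: $\underline u_p(\cdot,0^+)\to f(+\infty)<0\le 1=u(\cdot,0)$ on $B_{r_0}(z_p)$. Outside the ball $\underline u_p(\cdot,0^+)=f(-\infty)>0$ may exceed $u(\cdot,0)=0$ on $\Omega\setminus B_{r_0}(z_p)$, so I would localize the comparison to the parabolic cylinder $B_{r_0}(z_p)\times(0,T_0]$ for a small $T_0$. The initial data comparison is then automatic. On the lateral boundary $\partial B_{r_0}(z_p)\times(0,T_0]$, where $\underline u_p\equiv f(-2\delta)$, every point other than $p$ lies in the open exterior $\mathbb R^N\setminus\overline\Omega$, so continuity of $u$ (with $u\equiv 1$ there at $t=0$) gives $u\ge f(-2\delta)$ on any compact subset bounded away from $p$, provided $T_0$ is small. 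An exhaustion by inner balls $B_{r_0-\epsilon}(z_p)$ and passage to the limit $\epsilon\to 0^+$, combined with the continuity of $u$ from the open side, delivers the comparison at $p$ itself, so that $u(p,t)\ge f(-2\delta)$ for $t\in(0,T_0]$. Since $r_0$ and $T_0$ are independent of $p$, setting $c_0=\min\{c_1(T_0),\,f(-2\delta)\}>0$ finishes the proof.

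The hardest part is the lateral comparison at the single critical point $p$, where $\underline u_p$ equals precisely the bound we wish to establish. The limiting/exhaustion argument is essential here and relies on the fact that every other point of $\partial B_{r_0}(z_p)$ sits in the open exterior of $\overline\Omega$, where $u$ stays continuously close to $1$ for small $t$. The role of the $-2\delta$ shift in Lemma~\ref{le: a boundary value problem for ODE} is precisely to provide a strictly positive value $f(-2\delta)$ at the tangent point, uniform in $p$ and in $t\in(0,T_0]$.
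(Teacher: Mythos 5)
Your barrier is a genuine subsolution (the computation giving
$(\underline u_p)_t-\Delta\phi(\underline u_p)=\frac{N-1}{|x-z_p|\sqrt t}\,\phi'(f)f'\le 0$ is correct), but the comparison step has a gap that the exhaustion does not repair. The paper compares on the tubular neighborhood $\mathcal N=\{|d^*|\le\rho\}$ with the barrier $w=f(t^{-1/2}d^*(x))$ built from the \emph{signed distance to $\partial\Omega$}; the decisive feature is that the lateral boundary $\partial\mathcal N=\{d^*=\rho\}\cup\{d^*=-\rho\}$ is at a \emph{fixed positive distance} $\rho$ from $\partial\Omega$ on both sides, so there $w$ is either negative (inner sheet, $t$ small) or below $f(-\infty)<1\le\lim_{t\to0^+}u$ (outer sheet). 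Your comparison region $B_{r_0}(z_p)$ instead has $\partial B_{r_0}(z_p)$ tangent to $\partial\Omega$ at $p$, and on that sphere $\underline u_p\equiv f(-2\delta)>0$; so the boundary inequality $u\ge f(-2\delta)$ that you need at $(p,t)$ is precisely the estimate being proved, and cannot be assumed.

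The exhaustion by $B_{r_0-\epsilon}(z_p)$ does not resolve this. On the lateral boundary $\partial B_{r_0-\epsilon}(z_p)\times(0,T]$ you have $\underline u_p=f(\epsilon/\sqrt t-2\delta)$, which is $<f(-\infty)$, so it would suffice to show $u\ge f(-\infty)$ there. The set $\partial B_{r_0-\epsilon}(z_p)$ contains points at distance comparable to $\epsilon$ from $\partial\Omega$, and the short-time decay of $1-u$ at such points is governed by $e^{-c\epsilon^2/t}$; hence $u\ge f(-\infty)$ is only available for $t\lesssim\epsilon^2$. The admissible time horizon $T_\epsilon$ therefore tends to $0$ with $\epsilon$, and letting $\epsilon\to0^+$ produces the comparison only on a region shrinking to $\{p\}\times\{0\}$, not on $\{p\}\times(0,T_0]$ for any fixed $T_0>0$. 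Your final step (``$r_0$ and $T_0$ are independent of $p$'') is thus unjustified: no uniform $T_0$ has been produced. The fix is exactly the paper's device: take the barrier to depend on the signed distance $d^*$ itself (so that the profile slides along the whole of $\partial\Omega$ simultaneously), and compare on the collar $\mathcal N$, whose boundary is uniformly away from $\partial\Omega$; the term $\Delta d^*$ is then bounded on $\mathcal N$ and the $-2\delta$ shift in the ODE provides the slack $\frac{\delta}{t}f'$ that dominates it for $\sqrt t$ small, yielding $u\ge f(0)$ on $\partial\Omega\times(0,\tau]$. Your preliminary observations ($0<u<1$ via the strong maximum principle and the compactness bound on $\partial\Omega\times[\tau,1]$) are fine; the upper bound $u<1$ already follows directly and does not need the $v=1-u$ reduction.
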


\noindent
{\it Proof. }  First of all, by the strong maximum principle 
\begin{equation}
\label{between 0 and 1}
0 < u < 1 \ \mbox{ in }\ \mathbb R^N\times(0,+\infty).
\end{equation}
\par
Consider the signed distance function $d^*= d^*(x)$ of $x \in \mathbb R^N$ to the boundary $\partial\Omega$ defined by
\begin{equation}
\label{signed distance}
d^*(x) = \left\{\begin{array}{rll}
 \dist(x,\partial\Omega)\ &\mbox{ if }\ x \in \Omega,
\\
-\dist(x,\partial\Omega)\ &\mbox{ if  }\ x \not\in \Omega.
\end{array}\right.
\end{equation}
 Since $\partial\Omega$ is $C^2$ and compact, there exists a number $\rho > 0$ such that $d^*(x)$ is $C^2$-smooth on a compact neighborhood  $\mathcal N$ of the boundary $\partial\Omega$ given by
$$
\mathcal N = \{ x \in \mathbb R^N : -\rho \le d^*(x) \le \rho \}.
$$
\par
We set now
\begin{equation}
\label{subsolution-Cauchy}
w(x,t) = f\left(t^{-\frac12} d^*(x)\right)\ \mbox{ for }\  (x,t) \in \mathbb R^N \times (0,+\infty).
\end{equation}
Then, it follows from a straightforward computation and the properties of $f$ that 
$$
w_t - \Delta\phi(w) =-\frac 1t f^\prime\left(t^{-\frac12} d^*\right)\left\{-\delta+\sqrt{t}\ \phi^\prime(f)\Delta d^*\right\}\ \mbox{ in }\ \mathcal N\times(0,+\infty).
$$
Notice that if $\sqrt{t} < \frac \delta{\delta_2\max_{\mathcal N}|\Delta d^*|}$, then $w_t - \Delta\phi(w) < 0$.
Hence, since $\partial\mathcal N$ is compact, in view of Lemma \ref{le: a boundary value problem for ODE}, (\ref{subsolution-Cauchy}), and (\ref{Cauchy}), 
we observe that there exists a small $\tau > 0$ satisfying
\begin{eqnarray*}
&w_t - \Delta\phi(w) < 0 = u_t-\Delta\phi(u)\ &\mbox{ in }\ \mathcal N \times (0,\tau],
\\
&w \le u \ &\mbox{ on }\ \partial \mathcal N \times(0,\tau],
\\
& w \le u\ &\mbox{ on } \mathcal N \times\{0\}.
\end{eqnarray*}
\par
Therefore it follows from  the comparison principle that $w \le u$ in $\mathcal N \times (0,\tau]$. In particular, we have 
$$
u \ge f(0)\ (>0)\ \mbox{ on }\partial\Omega\times(0,\tau].
$$
Combining this with (\ref{between 0 and 1}) completes the proof.
\qed
\vskip.2cm
\begin{proofE}
Consider  the unique bounded solutions $u^\pm=u^\pm(x,t)$ of the following initial-boundary value problems:
\begin{eqnarray*}
&u^\pm_t =\Delta \phi(u^\pm)\quad&\mbox{ in }\ \Omega\times (0,+\infty),\label{nonlinear-eq-pm}\\
&u^+= 1\ \mbox{ and }\ u^-=c_0 \quad\quad\ \ \ \ &\mbox{ on }\ \partial\Omega\times
(0,+\infty),\label{nonlinear-Dirichlet-pm}\\
&u^\pm=0 \quad\quad\ \ \ \ & \mbox{ on }\ \Omega\times
\{0\}.\label{nonlinear-initial-pm}
\end{eqnarray*}
Then it follows from Lemma \ref{le:estimate from below} and the comparison principle that
\begin{equation}
\label{upper and lower solutions}
u^- \le u \le u^+\ \mbox{ in }\ \Omega\times (0,1].
\end{equation}
\par
By applying Theorem \ref{th:varadhan-nonlinear-2} to $u^\pm$, we have that, as $t\to 0^+,$ both functions
$-4t\Phi(u^\pm(x,t))$ converge to the function $d(x)^2$ uniformly
on every compact set in $\Omega$. 
\par 
On the other hand, since $\Phi$ is increasing in $s >0$, we have from (\ref{upper and lower solutions}) that
$$
-4t \Phi(u^-) \ge -4t\Phi(u) \ge -4t\Phi(u^+)\ \mbox{ in }\ \Omega\times (0,1],
$$
which implies that, as $t\to 0^+,$ the function
$-4t\Phi(u(x,t))$ converges to the function $d(x)^2$ uniformly
on every compact set in $\Omega$. 
This means that  Theorem \ref{th:varadhan-nonlinear}  also holds for the Cauchy problem (\ref{Cauchy}). 
\par
Finally, proceeding as in Section \ref{section2}, 
with the aid of the strong comparison principle for the Cauchy problem,  
we can easily show that Theorems \ref{th:matzoh-nonlinear} and \ref{th:matzoh-nonlinear-unbounded} also hold for the Cauchy problem (\ref{Cauchy}). 
\end{proofE}

%%%%%%%%%%%%%%%%%%%%%%%%%%%%%%%%%%%%%%
%%%%%  Section 5 begins %%%%%%%%%%%%%%%%%%%%%%%
%%%%%%%%%%%%%%%%%%%%%%%%%%%%%%%%%%%%%%

\setcounter{equation}{0}
\setcounter{theorem}{0}

\section{Sphere $\mathbb S^N$ and hyperbolic space $\mathbb H^N$}
\label{section5}

The purpose of this section is to show that similar results  hold also for the heat flow in the sphere $\mathbb S^N$ and the hyperbolic space $\mathbb H^N$ with $N \ge 2$. In order to handle $\mathbb S^N$ and $\mathbb H^N$ together, let us put $\mathbb M = \mathbb S^N$ or $\mathbb M = \mathbb H^N$.
\par
Let $\Omega$ be a domain in $\mathbb M$ with bounded $C^2$-smooth boundary $\partial\Omega$, and denote by $L$  the Laplace-Beltrami operator on $\mathbb M$. Let $u=u(x,t)$ be 
the unique bounded solution either of the following initial-boundary value problem for the heat flow:
\begin{eqnarray}
&u_t=L u\ \ &\mbox{in }\ \Omega\times (0,+\infty),\label{heat flow}\\
&u=1\ \ &\mbox{on }\ \pa\Omega\times (0,+\infty),\label{dirichlet heat flow}\\
&u=0\ \ &\mbox{on }\ \Omega\times \{0\},\label{initial heat flow}
\end{eqnarray}
or of the following Cauchy problem for the heat flow:
\begin{equation}
\label{Cauchy S and H}
u_t = L u\ \mbox{ in } \mathbb M  \times (0,+\infty),\ \mbox{ and }\ u = \chi_{\mathbb M\setminus\Omega}\ \mbox{ on }\mathbb M \times\{0\},
\end{equation}
where $ \chi_{\mathbb M\setminus\Omega}$ denotes the characteristic function of the set $\mathbb M \setminus\Omega$.

Denote by $d(x) = \inf\{ d(x,y) : y\in\partial\Omega\}$ the geodesic distance between $x$ and $\partial\Omega$, where $d(x,y)$ is the geodesic distance between two points $x$ and $y$ in $\mathbb M$.
Then, with the aid of a result of Norris \cite[Theorem 1.1, p. 82]{N} concerning the short-time asymptotics of the heat kernel of Riemannian manifolds, we later prove
\begin{theorem}
\label{th:varadhan for Riemannian manifolds}
Let $u$ be the solution either of problem {\rm(\ref{heat flow})-(\ref{initial heat flow})} or of problem {\rm (\ref{Cauchy S and H})} 
in $\mathbb S^N$ or  $\mathbb H^N$. Then the function $-4t\log u(x,t)$ converges  to the function  $d(x)^2$ as $t \to 0^+$ uniformly on every compact set in $\overline{\Omega}$. 
\end{theorem}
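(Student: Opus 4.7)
The strategy is to first establish the asymptotic for the Cauchy problem (\ref{Cauchy S and H}) directly from Norris's heat-kernel asymptotic by a Laplace-method argument, and then to transfer the result to the initial-boundary value problem (\ref{heat flow})-(\ref{initial heat flow}) by a comparison argument in the spirit of Section \ref{section4}, but run in the opposite direction (Cauchy $\Rightarrow$ IBVP rather than IBVP $\Rightarrow$ Cauchy).

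For the Cauchy case, I would use the heat-kernel representation $u(x,t)=\int_{\mathbb{M}\setminus\Omega} p(t,x,y)\,dy$, where $p$ is the heat kernel of $\mathbb{M}$. Norris's theorem supplies the uniform short-time asymptotic $-4t\log p(t,x,y)\to d(x,y)^2$ on compacts, and because $\mathbb{S}^N$ and $\mathbb{H}^N$ have bounded Ricci curvature, the Gaussian-type upper bound $p(t,x,y)\le C\,t^{-N/2}\exp(-d(x,y)^2/(Ct))$ is available. For $x$ in a compact $K\subset\Omega$, pick a nearest boundary point $y_\ast\in\partial\Omega$ with $d(x,y_\ast)=d(x)$, and split the integration over $\mathbb{M}\setminus\Omega$ into a small geodesic ball $B(y_\ast,\rho)\cap(\mathbb{M}\setminus\Omega)$ and its complement. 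The $C^2$-regularity of $\partial\Omega$ guarantees that the ball intersection has uniform positive volume and consists of points within distance $d(x)+\rho$ from $x$, so Norris's lower bound gives $u(x,t)\ge \exp\bigl(-(1+\eta)(d(x)+\rho)^2/(4t)\bigr)$ for $t$ small. For the upper bound, the Gaussian estimate together with $d(x,y)\ge d(x)$ on $\mathbb{M}\setminus\Omega$ and $d(x,y)\ge d(x)+\rho/2$ on the outer region controls the tail integral; in the $\mathbb{H}^N$ case, the exponential volume growth of geodesic balls is absorbed by the Gaussian decay on the short-time scale. Taking $-4t\log$ and sending $\rho,\eta\to 0$ yields $-4t\log u(x,t)\to d(x)^2$ uniformly on $K$. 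For compact subsets of $\overline\Omega$ that meet $\partial\Omega$, both sides tend to $0$ there, and uniformity in a boundary neighbourhood follows from the same heat-kernel bounds.

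For the IBVP, let $v$ denote the Cauchy-problem solution just treated and $u$ the solution of (\ref{heat flow})-(\ref{initial heat flow}). A barrier argument analogous to Lemma \ref{le:estimate from below}, carried out in the tubular coordinates around $\partial\Omega$ where the signed distance $d^\ast$ is $C^2$ and $L d^\ast$ is bounded, produces a constant $c_0>0$ with $v\ge c_0$ on $\partial\Omega\times(0,1]$. Applying the linear maximum principle to $u-v$ and to $v-c_0 u$ in $\Omega\times(0,1]$ (both have vanishing initial datum in $\Omega$ and have the correct sign on $\partial\Omega$) yields $v\le u\le v/c_0$; taking $-4t\log$ transfers the asymptotic from $v$ to $u$, again uniformly on compacts in $\overline\Omega$.

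The main obstacle will be the Laplace-method step in the Cauchy case: one must upgrade Norris's pointwise logarithmic asymptotic to integrated upper and lower bounds that are \emph{uniform} as $x$ ranges over a compact set, and, on $\mathbb{H}^N$, one must verify that the Gaussian decay of $p$ dominates the exponential volume growth of geodesic balls in the exterior of $\Omega$ on the relevant time scale. All remaining steps are routine adaptations of maximum-principle and barrier arguments already present in Sections \ref{section2}--\ref{section4}.
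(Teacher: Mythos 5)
Your approach is sound in outline but differs genuinely from the paper's, and one step needs to be sharpened. The paper does not run Laplace's method on the full heat-kernel integral $\int_{\mathbb M\setminus\Omega}p(t,x,y)\,dy$; instead it sandwiches $u$ --- for the Cauchy problem and the IBVP simultaneously --- between two auxiliary Cauchy solutions with \emph{compactly supported} initial data: $u^-$ with datum $\chi_{\mathcal N^-}$, where $\mathcal N^-\subset\mathbb M\setminus\Omega$ is the exterior half of a tubular neighbourhood of $\partial\Omega$, and $u^{\rho+}$ with datum $2\chi_{\mathcal N_\rho}$; the inequality $u\le u^{\rho+}$ in $\Omega$ is obtained not from the integral representation but from the comparison principle, after checking $u^{\rho+}>1$ on $\partial\Omega\times(0,t_\rho]$. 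Because both $\mathcal N^-$ and $\mathcal N_\rho$ are compact, Norris's uniform-on-compacts asymptotic for $p$ applies directly, and there is no tail integral to control --- this is exactly the trick that makes the argument painless on the non-compact $\mathbb H^N$. Your route instead integrates over all of $\mathbb M\setminus\Omega$, so you must control the tail with a Gaussian upper bound; this is feasible, but you need the sharp form $p(t,x,y)\le C_\eta\,t^{-N/2}\exp\bigl(-d(x,y)^2/((4+\eta)t)+C_\eta t\bigr)$ for arbitrarily small $\eta>0$ (available from Li--Yau/Davies under a lower Ricci bound, and explicitly on $\mathbb S^N$ and $\mathbb H^N$). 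A generic ``Gaussian-type'' bound with an unspecified constant $C$ in place of $4$ would \emph{not} suffice: the tail over $\{d(x,y)\ge d(x)+\rho/2\}$ contributes roughly $\exp(-(d(x)+\rho/2)^2/(Ct))$, which is $o(\exp(-d(x)^2/(4t)))$ only when $C$ is close enough to $4$ relative to $\rho$ and $\max_K d$. Your Cauchy-to-IBVP transfer via a barrier in tubular coordinates and the two maximum-principle comparisons $v\le u\le v/c_0$ is correct and is the natural analogue of the paper's Section 4 argument, though the paper avoids the extra step by running the sandwiching in a form that applies to both problems at once. In short, your approach buys directness in the Cauchy case at the cost of an extra (sharp) heat-kernel estimate, while the paper's buys uniformity and freedom from tail control at the cost of introducing the auxiliary Cauchy solutions $u^-$, $u^{\rho+}$.
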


Theorem \ref{th:varadhan for Riemannian manifolds} yields the following symmetry results.
\begin{theorem}
\label{th:S and H} 
Let $u$ be the solution either of problem {\rm(\ref{heat flow})-(\ref{initial heat flow})} or of problem {\rm (\ref{Cauchy S and H})} 
in $\mathbb S^N$ or $\mathbb H^N.$ 
In the case of $\mathbb S^N,$ assume that
$\overline{\Omega}$  is contained in a hemisphere in $\mathbb S^N.$ 
\par
Then Theorems  {\rm\ref{th:matzoh-nonlinear}} and {\rm\ref{th:matzoh-nonlinear-unbounded}} hold for $\mathbb H^N$ 
and Theorem {\rm\ref{th:matzoh-nonlinear}}  holds for $\mathbb S^N$ in the sense
that $\partial\Omega$ must be a geodesic sphere in $\mathbb H^N$ or  $\mathbb S^N,$ respectively.
\end{theorem}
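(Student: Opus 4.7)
The plan is to adapt the moving-planes arguments of the proofs of Theorems \ref{th:matzoh-nonlinear} and \ref{th:matzoh-nonlinear-unbounded} to the Riemannian setting, replacing Euclidean hyperplanes and reflections with totally geodesic hyperplanes of $\mathbb M$ and their associated isometric reflections. Theorem \ref{th:varadhan for Riemannian manifolds} takes the role of Theorem \ref{th:varadhan-nonlinear}, and the strong comparison principle together with the Hopf boundary point lemma remain available for the linear heat equation on $\mathbb M$, since in local coordinates $L$ is a uniformly parabolic operator. Because the diffusion here is linear ($\phi(s)\equiv s$), no nonlinear difficulty arises.

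First I would extract the parallel structure exactly as in the derivation of \eqref{parallel}: by Theorem \ref{th:varadhan for Riemannian manifolds} and the isothermic hypothesis \eqref{isotherm} (or \eqref{level-invariant}), the geodesic distance $d(\cdot)$ is constant on each component of $\partial D$. Combined with the $C^2$-smoothness of $\partial\Omega$ and $\partial D$, this forces $\partial\Omega$ and $\partial D$ to decompose into pairs of closed $C^2$-hypersurfaces geodesically parallel to one another. For the Cauchy problem \eqref{Cauchy S and H}, I would sandwich $u$ between two solutions of the initial-boundary value problem \eqref{heat flow}-\eqref{initial heat flow} with constant boundary values, as in the proof of Theorem \ref{th:Cauchy}, using the analog of Lemma \ref{le:estimate from below}, which extends to $\mathbb M$ via the same barrier construction based on the signed geodesic distance to $\partial\Omega$.

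I would then run the moving-planes procedure. In $\mathbb H^N$, fix a complete geodesic $\gamma$ and take the family $\{\pi_\lambda\}$ of totally geodesic hyperplanes perpendicular to $\gamma$, parametrized by signed geodesic distance. In $\mathbb S^N$, the hemisphere hypothesis allows one to fix a totally geodesic $(N-2)$-sphere lying in the equator of a hemisphere containing $\overline\Omega$ and to take the pencil of totally geodesic hyperspheres through it, which foliates the hemisphere. In either case the geodesic reflection $R_\lambda$ across $\pi_\lambda$ is a global isometry of $\mathbb M$, so that $w(x,t):=u(R_{\lambda_*}(x),t)$ solves the same heat equation as $u$. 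Starting from a position of $\pi_\lambda$ disjoint from the closure of the bounded set $G$ defined, as in \eqref{domain outside}, from the components of $\mathbb M\setminus\overline\Omega$, I would decrease $\lambda$ to the critical value $\lambda_*$ at which one of the two termination conditions (i) or (ii) of the proof of Theorem \ref{th:matzoh-nonlinear} first occurs. Selecting the parallel points $P^*$ or $Q^*$ on $\partial D$ as in that proof, the strong comparison principle yields $w<u$ on the relevant component $\Omega_\ell\subset\Omega$; in case (i) this contradicts $w(P^*,t)=u(P^*,t)$ directly, while in case (ii) the Hopf boundary point lemma produces a nonzero normal derivative of $u$ at $Q^*$, contradicting the vanishing tangential derivative implied by the isothermic condition. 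Varying the direction, invariance of $G$ under a sufficiently rich family of geodesic reflections sharing a common point forces $\partial\Omega$ to be a geodesic sphere.

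The main obstacle I expect is geometric and lies exclusively in the spherical case. In $\mathbb S^N$ any two totally geodesic hyperspheres intersect, and each such hypersphere splits $\mathbb S^N$ into two equal caps; without a size restriction on $\overline\Omega$ one cannot guarantee either an initial position of $\pi_\lambda$ with $\pi_\lambda\cap\overline G=\emptyset$ or that the reflected cap $G_\lambda'$ stays in the complementary half-sphere throughout the sweep. The hypothesis $\overline\Omega\subset$ hemisphere is precisely what secures both properties, and it is the reason why the counterpart of Theorem \ref{th:matzoh-nonlinear-unbounded} is not asserted for $\mathbb S^N$.
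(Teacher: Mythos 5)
Your proposal is correct and follows essentially the same route as the paper: Theorem \ref{th:varadhan for Riemannian manifolds} supplies the geodesic parallelism of $\partial D$ and $\partial\Omega$, and the symmetry is then obtained by the method of moving totally geodesic hypersurfaces (the paper invokes Kumaresan and Prajapat \cite{KuP} for exactly the reflection families you describe), with the hemisphere hypothesis serving precisely the role you identify in $\mathbb S^N$. The only superfluous step is your separate sandwiching argument for the Cauchy problem \eqref{Cauchy S and H}: Theorem \ref{th:varadhan for Riemannian manifolds} is already stated and proved for both the initial-boundary value problem and the Cauchy problem, so no reduction is needed there.
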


\begin{proof}
By Theorem \ref{th:varadhan for Riemannian manifolds}, each stationary isothermic surface of class $C^2$ in $\Omega$ is then parallel to a connected component of 
$\partial\Omega$ in the sense of the geodesic distance.
The claims of our theorem can thus be proved by replacing the method of moving planes, used in Section \ref{section2}
for $\mathbb R^N$, by a straightforward adaptation of the method of moving closed and totally geodesic hypersurfaces for 
$\mathbb S^N$ or $\mathbb H^N$ developed by Kumaresan and Prajapat in \cite{KuP}. 
\end{proof}

\vskip.2cm
\begin{proofF}  Consider the signed distance function $d^* = d^*(x)$ of  $x \in \mathbb M$ given by the same definition as (\ref{signed distance}). Since $\partial\Omega$ is $C^2$ and compact, there exists a number $\rho_0 > 0$ such that $d^*(x)$ is $C^2$-smooth on a compact neighborhood $\mathcal N$ of $\partial\Omega$ given by
$$
\mathcal N = \{ x \in \mathbb M : -\rho_0 \le d^*(x) \le \rho_0 \}.
$$
Set
$$
\mathcal N^- = \{ x \in \mathbb M : -\rho_0 \le d^*(x) \le 0 \} \ \left( \subset \mathcal N \right).
$$
Let $u^- = u^-(x,t)$ be the unique bounded solution of the following Cauchy problem:
\begin{equation}
\label{Cauchy in M-}
u^-_t = L u^-\ \mbox{ in } \mathbb M \times (0,+\infty),\ \mbox{ and }\ u^- = \chi_{\mathcal N^-}\ \mbox{ on }\mathbb M \times\{0\},
\end{equation}
where $ \chi_{\mathcal N^-}$ denotes the characteristic function of the set $\mathcal N^-.$ 
Moreover, for each $0 < \rho < \rho_0$, we set
$$
\mathcal N_\rho = \{ x \in \mathbb M : -\rho \le d^*(x) \le \rho \} \ \left( \subset \mathcal N \right).
$$
For each $0 < \rho < \rho_0$, let $u^{\rho+} = u^{\rho+}(x,t)$ be the unique bounded solution of the following Cauchy problem:
\begin{equation}
\label{Cauchy in M+}
u^{\rho+}_t = L u^{\rho+}\ \mbox{ in } \mathbb M \times (0,+\infty),\ \mbox{ and }\ u^{\rho+} = 2\chi_{\mathcal N_\rho}\ \mbox{ on }\mathbb M \times\{0\},
\end{equation}
where $ \chi_{\mathcal N_\rho}$ denotes the characteristic function of the set $\mathcal N_\rho.$

Let $\rho \in (0, \min\{ 1,\rho_0\} )$ be arbitrarily small.
Then, there exists a number $t_\rho > 0$ satisfying
\begin{equation}
\label{greater than 1}
u^{\rho+} > 1\ \mbox{ in }\ \partial\Omega \times (0, t_\rho].
\end{equation}
Thus it follows from (\ref{greater than 1}) and the comparison principle that for any $(x,t) \in \Omega \times (0,t_\rho]$
\begin{equation}
\label{estimates +-}
\int_{\mathcal N^-} p(t,x,y)\ dy = u^-(x,t) \le u(x,t) \le u^{\rho+}(x,t) = 2\int_{ \mathcal N_\rho} p(t,x,y)\ dy,
\end{equation}
where $p= p(t, x, y)$ denotes the heat kernel or the fundamental solution of the heat equation on the whole $\mathbb M$.

On the other hand,  it follows from  a result of Norris \cite[Theorem 1.1, p. 82]{N} that the function $-4t\log p(t,x,y)$ converges to the function $d(x,y)^2$ as $t \to 0^+$ uniformly on every compact set in $\mathbb M \times \mathbb M$.  
If $\mathcal K$ is any compact set contained in $\overline\Omega,$ there exists a number $t_{\rho,1} \in (0,t_\rho]$ satisfying
$$
\left| -4t\log p(t,x,y) - d(x,y)^2\right| < \rho^2\ \mbox{ for any }\ (t,x,y) \in (0,t_{\rho,1}] \times \mathcal K \times \mathcal N.
$$
Then we have that for any  $(t,x,y) \in (0,t_{\rho,1}] \times \mathcal K \times \mathcal N$
\begin{equation}
\label{estimates of heat kernel}
\exp\left(-\frac {d(x,y)^2 + \rho^2}{4t} \right) \le p(t,x,y) \le \exp\left(-\frac {d(x,y)^2 - \rho^2}{4t}\right).
\end{equation}
\par
Set $m = \max_{x\in \mathcal K} d(x)$. In view of  (\ref{estimates +-}) and  (\ref{estimates of heat kernel}), let us estimate $u$ from above. Since $d(x,y) \ge \max\{ 0, d(x)-\rho \}$ for any $(x,y) \in \mathcal K \times \mathcal N_\rho$, we observe that
$$
d(x,y)^2 -\rho^2 \ge d(x)^2-2m\rho\ \mbox{ for any }\ (x,y) \in \mathcal K \times \mathcal N_\rho.
$$
Combining this inequality and (\ref{estimates of heat kernel}) with (\ref{estimates +-}) yields that for any $(x,t) \in \mathcal K \times (0,t_{\rho,1}]$
\begin{equation}
\label{above}
u(x,t) \le 2\exp\left(-\frac {d(x)^2 - 2m\rho}{4t}\right)\left|\mathcal N_\rho\right|,
\end{equation}
where $\left|\mathcal N_\rho\right|$ denotes the volume of $\mathcal N_\rho$.  

Next, we proceed to estimating $u$ from below. For each $x \in \mathcal K$, there exists a point $z \in \partial\Omega$ with $d(x) = d(x,z)$. Then there exists a geodesic ball $B$ with radius $\frac 12\rho$ satisfying
$$
B \subset \mathcal N^- \ \mbox{ and }\ \overline{B} \cap \overline{\Omega} = \{ z \}.
$$
Since $d(x,y) \le d(x,z) + d(z,y) \le d(x) + \rho$ for any $y \in B$, we have
$$
d(x,y)^2 + \rho^2 \le d(x)^2 + 2(m+1)\rho\ \mbox{ for any }\ y \in B.
$$
Combining this inequality and (\ref{estimates of heat kernel}) with (\ref{estimates +-}) yields that for any $(x,t) \in \mathcal K \times (0,t_{\rho,1}]$
\begin{equation}
\label{below}
u(x,t) \ge  \exp\left(-\frac {d(x)^2 + 2(m+1)\rho}{4t}\right)\left| B\right|,
\end{equation}
where $\left| B\right|$ denotes the volume of $B$ and $|B|$ is independent of $x \in \mathcal K$.  

Therefore, it follows from (\ref{above}) and (\ref{below}) that there exists a number $t_{\rho,2} \in (0,t_{\rho,1}]$ satisfying for any $(x,t) \in \mathcal K \times (0,t_{\rho,2}]$
\begin{equation}
\label{last estimates}
d(x)^2 -(2m+1)\rho \le -4t\log u(x,t) \le d(x)^2 + (2m+3)\rho.
\end{equation}
This completes the proof.
\end{proofF}

\appendix

%%%%%%%%%%%%%%%%%%%%%%%%%%%%%%%%%%%%%%
%%%%%  Appendix begins %%%%%%%%%%%%%%%%%%%%%%%
%%%%%%%%%%%%%%%%%%%%%%%%%%%%%%%%%%%%%%
\renewcommand{\theequation}{\Alph{section}.\arabic{equation}}
\setcounter{equation}{0}
\setcounter{theorem}{0}

\section{Proof of inequality \eqref{main-inequality}}
\label{section6}

Let us write $v = v^\varepsilon$ for simplicity. By (\ref{at-maximum}) we have at $(x_0,t_0)$
\begin{eqnarray}
&& z_t= 2\zeta\zeta_t|\nabla v|^2 + 2\zeta^2 v_{x_k} v_{x_k t}-\lambda v_t \ge 0, \label{t-derivative}
\\
&& z_{x_i}=2\zeta\zeta_{x_i}|\nabla v|^2 + 2\zeta^2v_{x_k}v_{x_kx_i}-\lambda v_{x_i} = 0,\label{x-derivative}
\\
&& \Delta z = 2 |\nabla\zeta|^2|\nabla v|^2 + 2\zeta\Delta \zeta|\nabla v|^2 + 8\zeta\zeta_{x_i} v_{x_k} v_{x_kx_i}+ 2\zeta^2 v_{x_kx_i}v_{x_kx_i}\nonumber
\\
&&\qquad\quad + 2 \zeta^2 v_{x_k}(\delta v)_{x_k} -\lambda \Delta v \le 0,\label{Laplacian z}
\end{eqnarray}
where the summation convention is understood. Hence, it follows from (\ref{t-derivative}) and (\ref{Laplacian z}) that
\begin{eqnarray*}
0 &\le& z_t- \varepsilon \phi^\prime \Delta z
\\
&=& -\lambda(v_t-\varepsilon\phi^\prime\Delta v) + 2\zeta^2 v_{x_k}(v_t-\varepsilon\phi^\prime\Delta v)_{x_k}
 + 2\zeta^2 v_{x_k}\varepsilon\phi^{\prime\prime} \Psi^\prime (-\varepsilon^{-1})v_{x_k}\Delta v 
\\
&&+ 2\zeta\zeta_t|\nabla v|^2 
\\
&&-\varepsilon\phi^\prime\left\{ 2|\nabla \zeta |^2|\nabla v|^2+ 2\zeta\Delta\zeta|\nabla v|^2 + 8\zeta\zeta_{x_i} v_{x_k}v_{x_kx_i}+ 2\zeta^2v_{x_kx_i}v_{x_kx_i}\right\}.
\end{eqnarray*}
Then with the aid of (\ref{pressure-equation}), we get
\begin{eqnarray*}
0 &\le& \lambda |\nabla v|^2 + 2\zeta^2 v_{x_k}(-|\nabla v|^2)_{x_k} - 2\zeta^2|\nabla v|^2\phi^{\prime\prime} \Psi^\prime\Delta v + 2\zeta\zeta_t|\nabla v|^2
\\
&& -\varepsilon\phi^\prime\left\{ 2|\nabla \zeta |^2|\nabla v|^2+ 2\zeta\Delta\zeta|\nabla v|^2 + 8\zeta\zeta_{x_i} v_{x_k}v_{x_kx_i}+ 2\zeta^2v_{x_kx_i}v_{x_kx_i}\right\}. 
\end{eqnarray*} 
Here, we use $0=z_{x_i}v_{x_i}$ with the aid of (\ref{x-derivative}) to obtain
\begin{eqnarray*}
2\zeta^2 v_{x_k}(-|\nabla v|^2)_{x_k} &=& -4\zeta^2 v_{x_k}v_{x_i}v_{x_kx_i}
\\
&=& -4\left\{\frac\lambda 2 |\nabla v|^2 - \zeta\zeta_{x_i}v_{x_i}|\nabla v|^2\right\}
\\
&=& -2\lambda|\nabla v|^2 + 4 \zeta\zeta_{x_i}v_{x_i}|\nabla v|^2.
\end{eqnarray*}
Then it follows that
\begin{multline*}
\lambda|\nabla v|^2 \le  4 \zeta\zeta_{x_i}v_{x_i}|\nabla v|^2- 2\zeta^2|\nabla v|^2\phi^{\prime\prime} \Psi^\prime\Delta v + 2\zeta\zeta_t|\nabla v|^2
\\
-\varepsilon\phi^\prime\left\{ 2|\nabla \zeta |^2|\nabla v|^2+ 2\zeta\Delta\zeta|\nabla v|^2 + 8\zeta\zeta_{x_i} v_{x_k}v_{x_kx_i}+ 2\zeta^2v_{x_kx_i}v_{x_kx_i}\right\}. 
\end{multline*} 
Here,  by using inequality $ ab \le \frac 18 a^2+2b^2$, we get
\begin{eqnarray*}
-8\varepsilon\phi^\prime\zeta\zeta_{x_i} v_{x_k}v_{x_kx_i} &\le& \varepsilon\phi^\prime\left\{ \zeta^2|\nabla^2v|^2 + 16|\nabla\zeta|^2|\nabla v|^2\right\}
\\
&\le& \varepsilon\phi^\prime\zeta^2|\nabla^2v|^2 + 16\varepsilon\delta_2|\nabla\zeta|^2|\nabla v|^2.
\end{eqnarray*}
Consequently, combining these inequalities with Lemma \ref{le:uniform-bound} yields inequality (\ref{main-inequality}).

\end{document}